\let\oldsqrt\sqrt
\def\sqrt{\mathpalette\DHLhksqrt}
\def\DHLhksqrt#1#2{%
\setbox0=\hbox{$#1\oldsqrt{#2\,}$}\dimen0=\ht0
\advance\dimen0-0.2\ht0
\setbox2=\hbox{\vrule height\ht0 depth -\dimen0}%
{\box0\lower0.4pt\box2}}
\newcommand{\R}{\mathbb{R}} 
\newcommand{\ov}{\overline}
\renewcommand{\phi}{\varphi}
\newcommand{\cF}{{\mathcal F}}
\newcommand{\cH}{{\mathcal H}}
\newcommand{\cL}{{\mathcal L}}
\newcommand{\cN}{{\mathcal N}}
\newcommand{\cP}{{\mathcal P}}
\newcommand{\cQ}{{\mathcal Q}}
\theoremstyle{definition}
\newtheorem{defi}{Definition}[section]
\newtheorem{remark}[defi]{Remark}
\theoremstyle{plain} 
\newtheorem{thm}[defi]{Theorem}
\newtheorem{lemma}[defi]{Lemma}
\theoremstyle{definition}
\numberwithin{equation}{section}
 \title[Mixed local and nonlocal supercritical Neumann problem]{Radial positive solutions for mixed local and nonlocal supercritical Neumann problem} 
\author[David Amundsen, Abbas Moameni and Remi Yvant Temgoua]{David Amundsen$^1$, Abbas Moameni$^1$ and Remi Yvant Temgoua$^1$}
\address{$^1$ School of Mathematics and Statistics, Carleton University, Ottawa, Ontario, Canada.}
\email{dave@math.carleton.ca} 
\email{momeni@math.carleton.ca}
\email{remiyvanttemgoua@cunet.carleton.ca}
\date{\today}
\begin{document}

\begin{abstract}
	In this paper, we establish the existence of positive non-decreasing radial solutions for a nonlinear mixed local and nonlocal Neumann problem in the ball. No growth assumption on the nonlinearity is required.  We also provide a criterion for the existence of non-constant solutions provided the problem possesses a trivial constant solution.
\end{abstract}

\maketitle

{\footnotesize
	\begin{center}
	
		\textit{Keywords.}  Mixed local and nonlocal Neumann problem, Variational principle, Radial solutions, Supercritical nonlinearity.
	\end{center}
  \textit{~~~~2020 Mathematics Subject Classification:} 35A15, 35B06, 35B09, 35J60. 
}

\section{Introduction and main results}\label{section:introduction}

Given $s\in(\frac{1}{2},1)$, $2\leq p, q$ with $p\neq q$, we consider the following mixed local and nonlocal Neumann problem
 
\begin{equation}\label{e2}
     \left\{\begin{aligned}
     \cL u+u&=a(|x|)|u|^{p-2}u-b(|x|)|u|^{q-2}u~~~\text{in}~~B_1 \\
     \cN_su&=0 \quad\quad\quad\quad\quad\quad\quad\quad\quad\quad\quad\quad\quad\text{in}~~\R^N\setminus\overline{B}_1\\
     \frac{\partial u}{\partial\nu}&=0 \quad\quad\quad\quad\quad\quad\quad\quad\quad\quad\quad\quad\quad\text{on}~~\partial B_1,
     \end{aligned}
     \right.
	\end{equation}
where $B_1$ is the unit ball in $\R^N,~N\geq2$. Under mild assumptions on $a$ and $b$, we study the existence of radial solutions of \eqref{e2} without any restriction on the growths $p$ and $q$. Here, $a$ and $b$ are two non-negative radial functions with some monotonicity assumptions, and $\cL:=-\Delta +(-\Delta)^s$ is the so-called mixed local and nonlocal operator. Recall that $-\Delta$ denotes the classical Laplace operator and $(-\Delta)^s$ the standard fractional Laplacian defined for every sufficiently regular function $u: \R^N\rightarrow\R$ by 
\begin{equation*}
(-\Delta)^su(x)=c_{N,s}P.V.\int_{\R^N}\frac{u(x)-u(y)}{|x-y|^{N+2s}}\ dy,~~~x\in\R^N,
\end{equation*}
where $c_{N,s}$ is a normalization constant and ``$P.V$'' stands for the Cauchy principle value. Moreover, $\nu$ is the exterior normal to $B_1$ and $\cN_s$ is the nonlocal Neumann condition introduced in \cite{dipierro2017nonlocal} defined by
\begin{equation}
    \cN_su(x)=\int_{B_1}\frac{u(x)-u(y)}{|x-y|^{N+2s}}\ dy\quad\quad\text{for all}~~x\in\R^N\setminus\overline{B}_1.
\end{equation}
Neumann problems involving mixed local and nonlocal operators are explored to a very limited extent in the literature. In this regard, we can refer to  \cite{dipierro2022linear,dipierro2022non,mugnai2022mixed}. In \cite{dipierro2022linear}, the authors discussed the spectral properties of a weighted eigenvalue problem and presented a global bound for subsolutions. In \cite{dipierro2022non} a logistic equation is investigated. Very recently, Mugnai and Lippi \cite{mugnai2022mixed} obtained an existence result for a nonlinear problem governed by a mixed operator of type $\cL$ with Neumann conditions. In this paper, the nonlinearity is of a subcritical type.

The aim of this paper is to investigate nonlinear problems for $\cL=-\Delta+(-\Delta)^s$ with Neumann conditions. The nonlinearity that we consider has no growth condition, allowing for supercritical growth. To the best of our knowledge, such types of problems have not yet been treated in the literature. Our motivation comes from the study of a nonlinear problem of the form
\begin{equation}\label{sample}
    \cP u+u=\cF(u)~~\text{in}~~\Omega,
\end{equation}
with some Neumann conditions on $u$. Here, $\Omega$ is a domain in $\R^N$, $\cP$ is some operator (local/nonlocal/mixed) and $\cF$ is a nonlinear function with supercritical growth. One of the main questions addressed in \eqref{sample} is the existence of solutions. We briefly present a literature survey of problems of the type \eqref{sample}.

\begin{itemize}
    \item Local case, i.e., $\cP=-\Delta$: Serra and Tilli considered in \cite{serra2011monotonicity} the Neumann problem
    \begin{equation}\label{classical-neumann-problem}
        -\Delta u+u= a(|x|)f(u),~~~~~u>0~~~~~\text{in}~~B,\quad\quad\frac{\partial u}{\partial\nu}=0~~\text{on}~~\partial B
    \end{equation}
    and they have used a monotonicity constraint to show that, under some assumptions on $a$ and $f$, \eqref{classical-neumann-problem} has at least one radially increasing solution. A similar result has been obtained in \cite{barutello2008note} by applying a shooting method when $a(|x|)f(u)$ is replaced by $|x|^{\alpha}|u|^p$ for every $p>1$ and $\alpha>0$. In \cite{bonheure2012increasing} the authors apply topological and variational arguments to prove that problem \eqref{classical-neumann-problem} admits at least one non-decreasing radial solution without any growth assumption on $f$. They also obtained the existence of a non-constant solution in the case that the problem has nontrivial constant solutions. These results were extended to the $p$-Laplace operator in \cite{colasuonno2016p,secchi2012increasing}. Very recently, Moameni and Salimi \cite{moameni2019existence} considered \eqref{classical-neumann-problem} when $a(|x|)f(u)$ is replaced by $a(|x|)|u|^{p-2}u-b(|x|)|u|^{q-2}u$ and they obtained the existence of a radially non-decreasing solution under some assumptions on $a$ and $b$ without imposing any growth assumption on $p$ and $q$. The method they use relies on a new variational principle recently developed in \cite{moameni2017variational,moameni2018variational} that consists of restricting the Euler-Lagrange function to a convex set. Other interesting papers in which the  Neumann problem of type \eqref{classical-neumann-problem} is considered are (but not limited to)  \cite{yadava1993conjecture,yadava1991existence,cowan2016supercritical,bonheure2016multiple,faraci2003multiplicity}.
    ~~\\
    \item Nonlocal case, i.e., $\cP=(-\Delta)^s$, for $s\in (0,1)$: Cinti and Colasuonno studied in \cite{cinti2020nonlocal} the nonlocal Neumann problem
    \begin{equation}\label{nonlocal-neumann-problem}
        (-\Delta)^su+u=f(u),~~~~~u\geq0~~~~\text{in}~~~\Omega,~~~~~\cN_su=0~~~~\text{in}~~~\R^N\setminus\overline{\Omega}
    \end{equation}
    where $s\in(\frac{1}{2},1)$, $\Omega\subset\R^N$ is either a ball or an annulus and the nonlinearity $f$ is assumed to have supercritical growth. Using a truncation argument, they showed that problem \eqref{nonlocal-neumann-problem} possesses a positive non-decreasing radial solution. In the case when the problem admits a nontrivial constant solution, they also address the existence of a nonconstant solution as in \cite{bonheure2012increasing}.
    ~~~\\
    \item Mixed operator case, i.e., $\cP=\cL=-\Delta+(-\Delta)^s$: As stated above, very few references investigating the Neumann problem governed by mixed local and nonlocal operators $\cL$ can be found in the literature apart from \cite{dipierro2022linear,dipierro2022non,mugnai2022mixed}. As we will see in the rest of the paper, an adaptation can be made to study the Neumann problem of type \eqref{sample} with $\cP=\cL$ and $\cF$ being a supercritical nonlinearity. We will focus on the model case $\cF(u)=a(|x|)|u|^{p-2}u-b(|x|)|u|^{q-2}u$ as in \cite{moameni2019existence}. The consequent lack of compactness can be circumvented by working in a convex set of non-negative and non-decreasing radial functions. Although our argument is closely related to the one in \cite{moameni2019existence}, it involves some technical issues in many regards due to the presence of the nonlocal term. For instance, to the best of our knowledge, no higher regularity is available for the operator $\cL$ with Neumann conditions. Moreover, the action of $\cL$ on radial functions does not give rise to an ODE in contrast with the Laplace operator $-\Delta$.
\end{itemize}
~~\\
The main motivation for considering problem \eqref{e2} in this paper comes from the above discussion. As far as we are aware, there is no result involving a mixed local and nonlocal operator $\cL$ with a supercritical nonlinearity, and with Neumann conditions. We aim to bridge this gap in the present paper. Notice that the case of Dirichlet condition has been considered very recently in \cite{amundsen2023mixed}.
~~~\\\\
To state our main results, we introduce the following assumptions on $a$ and $b$: 

\begin{itemize}
    \item [(A)] $a\in L^1(0,1)$ is non-decreasing and $a(r)>0$ for a.e. $r\in [0,1]$;
    \item [(B)] $b\in L^1(0,1)$ is non-negative and non-increasing in $[0,1]$ 
\end{itemize}
Our first main result is concerned with the case when $p>q$. It reads as follows.

\begin{thm}\label{first-main-result}
	Assume that $(A)$ and $(B)$ hold, and $p>q$. Then, problem \eqref{e2} admits at least one positive non-decreasing radial solution.
\end{thm}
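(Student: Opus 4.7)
The plan is to apply the convex-set variational principle of Moameni~\cite{moameni2017variational,moameni2018variational}, following the scheme of \cite{moameni2019existence} but substantially adapted to accommodate the mixed operator $\cL=-\Delta+(-\Delta)^s$ together with the nonlocal Neumann condition $\cN_s u=0$. First I would introduce the Hilbert space $H$ associated with the bilinear form
\[
\cB(u,v)=\int_{B_1}\nabla u\cdot\nabla v\,dx+\tfrac{c_{N,s}}{2}\iint_{Q}\frac{(u(x)-u(y))(v(x)-v(y))}{|x-y|^{N+2s}}\,dx\,dy+\int_{B_1}uv\,dx,
\]
where $Q=\R^{2N}\setminus((\R^N\setminus B_1)\times(\R^N\setminus B_1))$, and the convex cone
\[
\cK=\{u\in H:u\text{ is radial, non-negative, and non-decreasing in }|x|\text{ on }B_1\}.
\]
The decisive feature of $\cK$, which makes supercritical growth tractable, is an a priori $L^\infty$-bound $\|u\|_{L^\infty(B_1)}\le C\|u\|_H$ for every $u\in\cK$: the essential supremum of a radial non-decreasing function is attained at $|x|=1$, and the one-dimensional trace of the radial profile at $r=1$ is controlled by $\|u\|_H$. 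Consequently the energy
\[
I(u)=\tfrac{1}{2}\|u\|_H^2-\tfrac{1}{p}\int_{B_1}a(|x|)u_+^p\,dx+\tfrac{1}{q}\int_{B_1}b(|x|)u_+^q\,dx
\]
is well-defined and of class $C^1$ on $\cK$ for arbitrarily large $p$.

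Next I would split $I=\Psi-\Phi$ with the convex coercive part $\Psi(u)=\tfrac{1}{2}\|u\|_H^2+\tfrac{1}{q}\int_{B_1}b u_+^q\,dx$ and the differentiable piece $\Phi(u)=\tfrac{1}{p}\int_{B_1}a u_+^p\,dx$. The variational principle associates to this decomposition a dual functional over $\cK$ whose minimizers correspond, under a compatibility condition involving the normal cone of $\cK$, to critical points of $I$ on the full space $H$. Existence of a minimizer $\bar u\in\cK$ would follow from the direct method: the $L^\infty$-control on $\cK$ renders $\Phi$ sequentially continuous along weakly convergent sequences in $\cK$ (via dominated convergence, exploiting the almost-everywhere monotone limit), while $\Psi$ is weakly lower semicontinuous and coercive. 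Non-triviality of $\bar u$ would be obtained by evaluating $I$ at a small positive constant function and invoking $(A)$ together with $p>q\ge 2$ to force the infimum to be strictly negative.

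The main obstacle, and the genuinely new difficulty compared with \cite{moameni2019existence}, is verifying the compatibility condition, i.e.\ upgrading the variational inequality $\langle I'(\bar u),v-\bar u\rangle\ge 0$ for every $v\in\cK$ to the Euler--Lagrange equation $\cL\bar u+\bar u=a\bar u^{p-1}-b\bar u^{q-1}$ on $H$. In the purely local case the radial equation reduces to a one-dimensional ODE and the Lagrange multiplier associated with the monotonicity constraint is analysed by ODE tools; here $\cL$ does \emph{not} reduce to an ODE on radial functions, because the fractional part mixes values along the whole profile, and no higher-regularity theory is available for $\cL$ with Neumann data. My approach is to test the inequality with radial perturbations $\psi$ supported on the open set where $\bar u$ is strictly increasing---such perturbations preserve membership in $\cK$ for small parameter---thereby producing the Euler--Lagrange identity on a dense subspace, and then to rule out concentration of the multiplier on the ``flat set'' using the sign of the nonlinearity together with the maximum principle for $\cL$ from \cite{dipierro2022linear}. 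Strict positivity and non-degeneracy on $\ov{B}_1$ follow from a Hopf-type lemma for $\cL$, producing the positive non-decreasing radial solution asserted by the theorem.
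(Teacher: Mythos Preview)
Your proposal contains a genuine gap at the existence step. For $p>q\ge 2$ the functional $I$ is \emph{not} bounded below on $\cK$: taking the constant function $u\equiv t>0$ gives
\[
I(t)=\frac{t^2}{2}|B_1|+\frac{t^q}{q}\int_{B_1}b\,dx-\frac{t^p}{p}\int_{B_1}a\,dx\longrightarrow -\infty\quad\text{as }t\to\infty,
\]
so the direct method cannot produce a minimizer. (Your non-triviality check is also inverted: for \emph{small} $t$ the quadratic term dominates and $I(t)>0$, not $<0$.) The paper handles $p>q$ by the mountain pass theorem of Szulkin for the nonsmooth functional $I_K=\Psi_K-\Phi$: one verifies the (PS) condition on $\cK$ (using the $L^\infty$-bound you correctly identify) and the mountain pass geometry, obtaining a critical point $\overline{u}\in\cK$ at a strictly positive level $c>0$. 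Minimization is the correct mechanism only in the companion case $p<q$.

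Your strategy for upgrading the variational inequality to the Euler--Lagrange equation is also different from, and vaguer than, the paper's. Rather than analysing the normal cone of $\cK$ via perturbations on the strictly-increasing set and a maximum principle on the flat set, the paper solves the \emph{auxiliary} problem
\[
\cL v+v+b(|x|)|v|^{q-2}v=a(|x|)|\overline{u}|^{p-2}\overline{u}\quad\text{in }B_1,
\]
with both Neumann conditions, and proves by a direct test-function argument (exploiting that $a$ is non-decreasing, $b$ non-increasing, and $\overline{u}$ non-decreasing) that the unique solution $v$ is itself radially non-decreasing, hence $v\in\cK$. A convexity computation then forces $v=\overline{u}$. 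This route avoids any appeal to higher regularity or Hopf-type lemmas for $\cL$, neither of which is currently available under Neumann conditions.
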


Notice that assumptions $(A)$ and $(B)$ are satisfied by $a=|x|^{\alpha}$ and $b=\frac{\mu}{|x|^{\beta}}$ where $\alpha, \mu\geq0$ and $0\leq\beta<N$. Therefore, Theorem \ref{first-main-result} applies to the following problem

\begin{equation}\label{e2-2}
     \left\{\begin{aligned}
     \cL u+u&=|x|^{\alpha}|u|^{p-2}u-\frac{\mu}{|x|^{\beta}}|u|^{q-2}u\quad\quad~\text{in}~~B_1 \\
     \cN_su&=0 \quad\quad\quad\quad\quad\quad\quad\quad\quad\quad\quad\quad\quad\text{in}~~\R^N\setminus\overline{B}_1\\
     \frac{\partial u}{\partial\nu}&=0 \quad\quad\quad\quad\quad\quad\quad\quad\quad\quad\quad\quad\quad\text{on}~~\partial B_1.
     \end{aligned}
     \right.
	\end{equation}
In particular, when $\mu\equiv0$ and $\alpha>0$, \eqref{e2-2} is the so-called Neumann mixed local and nonlocal H\'{e}non problem. 

Note that when the functions $a$ and $b$ are constants, problem \eqref{e2} always admits a constant solution. In our next result, we show that the solution obtained in Theorem \ref{first-main-result} is non-constant.
\begin{thm}\label{non-consistancy-theorem}
    Suppose that $2<q<p$. Assume also that $a\equiv 1,~ b>0$ and 
    \begin{equation}\label{key-condition-for-non-constancy-of-solutions}
        b(q-2)<p-2.
    \end{equation}
    Then, problem \eqref{e2} admits at least one non-constant non-decreasing radial solution. 
\end{thm}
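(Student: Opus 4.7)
The approach is by contradiction. Suppose the non-decreasing radial solution $u$ furnished by Theorem~\ref{first-main-result} is a constant, $u\equiv c$. Under $a\equiv 1$ and $b>0$, a positive constant solves \eqref{e2} if and only if $c^{p-2}-bc^{q-2}=1$. An elementary monotonicity analysis of $f(t):=t^{p-2}-bt^{q-2}$ shows that this equation admits a unique positive root; the hypothesis \eqref{key-condition-for-non-constancy-of-solutions} ensures that the minimum of $f$ on $(0,\infty)$ is attained at a point in $(0,1)$, and since $f(1)=1-b<1$, the monotonicity of $f$ past its minimum forces $c>1$.

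Let $K$ denote the convex cone of non-negative, non-decreasing radial functions on which the variational construction of Theorem~\ref{first-main-result} is performed, and let $J$ be the action functional whose Euler--Lagrange equation is \eqref{e2}. Since $c\in K$ is a critical point of $J$, the first variation at $c$ vanishes; a direct computation combined with the constant-solution relation gives
\[
J''(c)(\phi,\phi)=\cE(\phi,\phi)-\kappa\int_{B_1}\phi^2, \qquad \kappa:=(p-2)+(p-q)\,b\,c^{q-2},
\]
where $\cE(\phi,\phi)=\int_{B_1}|\nabla\phi|^2+\tfrac{c_{N,s}}{2}[\phi]_s^2$ is the quadratic form associated with $\cL$. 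The plan is then to exhibit a non-constant, non-decreasing radial test direction $\phi$ such that $v_t:=c+t\phi\in K$ for small $t>0$ and
\[
\cE(\phi,\phi)<\kappa\int_{B_1}\phi^2.
\]
A Taylor expansion yields $J(v_t)=J(c)+\tfrac{t^2}{2}J''(c)(\phi,\phi)+o(t^2)<J(c)$ for all sufficiently small $t>0$; since $v_t\in K$, this contradicts the minimality of $u=c$ in the variational characterization, thereby forcing $u$ to be non-constant.

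The main technical obstacle is the strict Rayleigh-quotient inequality above. A natural test direction is $\phi(x)=|x|$, or a smooth monotone approximation thereof, extended to $\R^N$ consistently with the nonlocal Neumann condition. For such $\phi$ the local ratio $\int|\nabla\phi|^2/\int\phi^2$ reduces to an explicit constant depending only on $N$, but the nonlocal seminorm $[\phi]_s^2$ does not separate variables and must be controlled through a dedicated estimate exploiting the Lipschitz regularity of $\phi$ together with its prescribed extension outside $B_1$, a step with no counterpart in the purely local case of \cite{moameni2019existence}. The hypothesis \eqref{key-condition-for-non-constancy-of-solutions}, combined with the identity $bc^{q-2}=c^{p-2}-1$ and the bound $c>1$ already established, is then invoked to force $\kappa$ to exceed the combined local--nonlocal Rayleigh quotient of $\phi$, closing the argument.
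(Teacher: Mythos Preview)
Your argument rests on a mischaracterization of the variational origin of the solution from Theorem~\ref{first-main-result}. That solution is obtained via the mountain pass theorem applied to $I_K$; in the regime $p>q$ the functional $I_K$ is unbounded from below on $K$ (indeed $I_K(te)\to-\infty$ for any nonzero $e\in K$), so there is no ``minimality of $u=c$ in the variational characterization'' to contradict. Exhibiting a direction $\phi\in K$ with $I(c+t\phi)<I(c)$ for small $t>0$ only says that the constant $u_0$ is not a local minimizer of $I_K$, which is already clear and carries no information about whether the mountain pass solution coincides with $u_0$. In fact, a mountain pass critical point is generically of saddle type, so negative second variation in some admissible direction is entirely compatible with $\overline u=u_0$.

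The paper's route is to compare \emph{levels}: since $I_K(\overline u)$ equals the mountain pass value $c=\inf_{\gamma\in\Gamma}\max_{t\in[0,1]}I_K(\gamma(t))$, it suffices to build a single admissible path whose maximal height is strictly below $I(u_0)$. The path is $\gamma_\tau(t)=t r(u_0+\tau v)$, where $v$ is the second \emph{radial} Neumann eigenfunction of $\cL+1$ (so $\int_{B_1}v=0$ and the associated eigenvalue satisfies $\lambda_2^{\mathrm{rad}}>1$). An implicit function theorem argument locates the maximum of $t\mapsto I(\gamma_\tau(t))$ near $t=1/r$, and a second-order Taylor expansion at $u_0$, combined with $G(u_0)=0$, $\lambda_2^{\mathrm{rad}}>1$, and precisely the hypothesis $b(q-2)<p-2$, gives $\max_t I(\gamma_{\tau_0}(t))<I(u_0)$ for small $\tau_0>0$. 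The orthogonality $\int_{B_1}v=0$ is what makes the first-order term vanish and the expansion close; an ad hoc choice like $\phi(x)=|x|$ would not provide this, quite apart from the issue above.
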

In Theorem \ref{second-main-result} we address another key result for the case   $p<q$.  In this case, under the integrability assumption  \[\Big(\frac{a^q}{b^p}\Big)^{\frac{1}{q-p}}\in L^1(0,1),\] we prove the existence of one positive non-decreasing radial solution. \\

The paper is organized as follows.  Section \ref{section:proof-of-main-result} is devoted to the proof of existence results for both cases $p<q$ and $p>q.$  In  Section \ref{section:non-constency} we study the non-constancy of solutions.\\

\section{Existence of solutions via a variational method}\label{section:proof-of-main-result}

We begin this section by introducing the functional space in which we work. For all $s\in(0,1)$, we first recall the fractional Sobolev space $H^s_{B_1}$ introduced in \cite{dipierro2017nonlocal} and defined as
\begin{equation}
    H^s_{B_1}:=\Big\{u: \R^N\to\R: u|_{B_1}\in L^2(B_1)~~\text{and}~~\iint_{\cQ}\frac{|u(x)-u(y)|^2}{|x-y|^{N+2s}}\ dxdy<+\infty\Big\},
\end{equation}
where $\cQ=\R^{2N}\setminus(B_1^c)^2$. Now, we define 
\begin{equation}
    \mathbb{X}=\mathbb{X}(B_1)=H^1(B_1)\cap H^s_{B_1}.
\end{equation}
Then $\mathbb{X}$ is a Hilbert space with scalar product
\begin{equation*}
    \langle u, v\rangle_{\mathbb{X}}:=\int_{B_1}uv\ dx+\int_{B_1}\nabla u\cdot \nabla v\ dx+\iint_{\cQ}\frac{(u(x)-u(y))(v(x)-v(y))}{|x-y|^{N+2s}}\ dxdy\quad\text{for all}~~u, v\in \mathbb{X}.
\end{equation*}
Moreover, we also introduce the seminorm $[\cdot]_{\mathbb{X}}$ defined for every $u\in \mathbb{X}$ by
\begin{equation*}
    [u]^2_{\mathbb{X}}=\int_{B_1}|\nabla u|^2\ dx+\iint_{\cQ}\frac{|u(x)-u(y)|^2}{|x-y|^{N+2s}}\ dxdy.
\end{equation*}
Then $\mathbb{X}$ is endowed with the norm
\begin{equation*}
    \|u\|^2_{\mathbb{X}}=\|u\|^2_{L^2(B_1)}+[u]^2_{\mathbb{X}}.
\end{equation*}
Let us also recall the classical fractional Sobolev space $H^s(B_1)$ defined by
\begin{equation*}
    H^s(B_1):=\{u\in L^2(B_1):|u|_{H^s(B_1)}<\infty\}
\end{equation*}
equipped with the norm
\begin{equation*}
    \|u\|_{H^s(B_1)}=\|u\|_{L^2(B_1)}+|u|_{H^s(B_1)},
\end{equation*}
where
\begin{equation*}
    |u|^2_{H^2(B_1)}=\int_{B_1}\int_{B_1}\frac{(u(x)-u(y))^2}{|x-y|^{N+2s}}\ dxdy.
\end{equation*}
Let $\mathbb{X}_{rad}$ be the space of radial functions in $\mathbb{X}$. Denote by $L^p_a(B_1)$ (resp. $L^q_b(B_1)$) the weighted $L^p$ (resp. $L^q$) space defined by
\begin{equation*}
    L^p_a(B_1):=\Big\{u: \int_{B_1}a(|x|)|u|^p\ dx<\infty\Big\}\quad\quad \text{resp.}\quad\quad L^q_b(B_1):=\Big\{u: \int_{B_1}b(|x|)|u|^q\ dx<\infty\Big\}.
\end{equation*}
Now, we consider the Banach space $V$ defined by
\begin{equation*}
    V=\mathbb{X}_{rad}\cap L^p_a(B_1)\cap L^q_b(B_1),
\end{equation*}
equipped with the norm $\|\cdot\|_{V}$ defined by
\begin{equation*}
    \|u\|_{V}:=\|u\|_{\mathbb{X}}+\|u\|_{L^p_a(B_1)}+\|u\|_{L^q_b(B_1)},
\end{equation*}
where 
\begin{equation*}
    \|u\|_{L^p_a(B_1)}:=\Big(\int_{B_1}a(|x|)|u|^p\ dx\Big)^{\frac{1}{p}}\quad\quad\text{and}\quad\quad \|u\|_{L^q_b(B_1)}:=\Big(\int_{B_1}b(|x|)|u|^q\ dx\Big)^{\frac{1}{q}}.
\end{equation*}
Notice that throughout this section, the functions $a$ and $b$ are assumed to satisfy assumptions $(A)$  and $(B)$.

Let $I$ be the Euler-Lagrange functional corresponding to \eqref{e2}. Then, 
\begin{equation*}
    I(u)=\frac{1}{2}\int_{B_1}(|\nabla u|^2+|u|^2)\ dx+\frac{1}{2}\iint_{\cQ}\frac{(u(x)-u(y))^2}{|x-y|^{N+2s}}\ dxdy+\frac{1}{q}\int_{B_1}b(|x|)|u|^q\ dx-\frac{1}{p}\int_{B_1}a(|x|)|u|^p\ dx.
\end{equation*}
We now define $\Psi: V\to\R$ and $\Phi: V\to\R$ by
\begin{equation*}
    \Psi(u):=\frac{1}{2}\int_{B_1}(|\nabla u|^2+|u|^2)\ dx+\frac{1}{2}\iint_{\cQ}\frac{(u(x)-u(y))^2}{|x-y|^{N+2s}}\ dxdy+\frac{1}{q}\int_{B_1}b(|x|)|u|^q\ dx
\end{equation*}
and
\begin{equation*}
    \Phi(u):=\frac{1}{p}\int_{B_1}a(|x|)|u|^p\ dx
\end{equation*}
so that
\begin{equation}
    I:=\Psi-\Phi.
\end{equation}
Clearly, $\Phi\in C^1(V,\R)$ and $\Psi$ is a proper, convex, and lower semi-continuous function. Moreover, $\Psi$ is G$\hat{\text{a}}$teaux differentiable with
\begin{align*}
D\Psi(u)(v)&=\int_{\Omega}(\nabla u\cdot\nabla v+uv)\ dx+\iint_{\cQ}\frac{(u(x)-u(y))(v(x)-v(y))}{|x-y|^{N+2s}}\ dxdy+\int_{B_1}b(|x|)|u|^{q-2}uv\ dx\\
&=\langle u, v\rangle_{\mathbb{X}}+\int_{B_1}b(|x|)|u|^{q-2}uv\ dx.
\end{align*}
Next, we consider the convex set consisting of non-negative, radial, non-decreasing functions
\begin{equation}\label{convex-set}
    K:=\Bigg\{u\in V: \begin{aligned}
        & u~\text{is radial and}~ u\geq0~\text{in}~\R^N,\\
        & u(r_1)\leq u(r_2)~~\text{for all}~~0<r_1\leq r_2<1
    \end{aligned}\Bigg\}.
\end{equation}
We then denote by 
\begin{equation}\label{restricted-euler-lagrange-function}
    I_K=\Psi_K-\Phi
\end{equation}
the restriction of $I$ to $K$, where
\begin{equation}\label{psi-k}
    \Psi_K(u)=\left\{\begin{aligned}
        &\Psi(u)~~~\text{if}~~u\in K,\\
        &+\infty~~~\text{otherwise}.
    \end{aligned}  
    \right.
\end{equation}
We now recall the following definition of a critical point for lower semi-continuous functions due to Szulkin, see \cite{szulkin1986minimax}.

\begin{defi}\label{def3}
	Let $V$ be a real Banach space, $\Phi\in C^1(V,\R)$ and $\psi: V\to (-\infty,+\infty]$ be proper, convex and lower semi-continuous. A point $u\in V$ is a critical point of 
    \begin{equation}\label{energy-functional}
        F=\psi-\Phi
    \end{equation}
    if $u\in Dom(\psi)$ and it satisfies the inequality
	\begin{equation}\label{critical-point}
	\langle D\Phi(u), u-v\rangle+\psi(v)-\psi(u)\geq0,~~\text{for all}~~v\in V.
	\end{equation}
\end{defi}
We also have the following

\begin{defi}
	Let $F=\psi-\Phi$ and $c\in\R$. We say that $F$ satisfies the  Palais-Smale compactness condition \textbf{(PS)} if every sequence $\{u_k\}$ such that $F(u_k)\to c$ and 
	\begin{equation}
	\langle D\Phi(u_k), u_k-v\rangle+\psi(v)-\psi(u_k)\geq-\varepsilon_k\|u_k-v\|_{V},~~~\text{for all}~~v\in V,
	\end{equation}
	where $\varepsilon_k\to0$, possesses a convergent subsequence.
\end{defi}
The next two results are due to Szulkin \cite{szulkin1986minimax}.

\begin{thm}(Mountain pass theorem)\label{mountain-pass-theorem}
	Suppose that $F:V\to (-\infty,+\infty]$ is as in \eqref{energy-functional} and satisfies \textbf{(PS)} and suppose moreover that $F$ satisfies mountain pass geometry \textbf{(MPG)}
   \begin{itemize}
       \item [$(i)$] $F(0)=0$
       \item[$(ii)$] there exists $e\in V$ such that $F(e)\leq 0$
       \item[$(iii)$] there exists some $\eta$ such that $0<\eta<\|e\|$ and for every $u\in V$ with $\|u\|=\eta$ one has $F(u)>0$.
   \end{itemize}
   Then $F$ has a critical value $c$ which is defined by
   \begin{equation}\label{critical-value}
       c=\inf_{\gamma\in\Gamma}\sup_{t\in [0,1]}F(\gamma(t)),
   \end{equation}
   where $\Gamma=\{\gamma\in C([0,1],V): \gamma(0)=0,~ \gamma(1)=e\}$.
\end{thm}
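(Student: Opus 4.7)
The plan is to adapt the Ambrosetti-Rabinowitz minimax argument to the Szulkin setting, where $F=\psi-\Phi$ is only lower semi-continuous (with $\psi$ convex lsc and $\Phi\in C^1$). The classical smooth deformation lemma is unavailable, so the main substitute will be Ekeland's variational principle applied to the path space, together with a one-sided estimate for $F$ coming from the convexity of $\psi$.

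First, define $c$ by the minimax formula and check $c>0$. Since $\|e\|>\eta>0$ and $\gamma(0)=0$, every path $\gamma\in\Gamma$ meets the sphere $\{u\in V:\|u\|_V=\eta\}$ at some time $t_\gamma$, so by $(iii)$, $\sup_{t}F(\gamma(t))\geq F(\gamma(t_\gamma))\geq \inf_{\|u\|_V=\eta}F(u)=:\alpha>0$, and taking the infimum over $\gamma$ yields $c\geq\alpha>0$. Argue next by contradiction and assume that $c$ is not a critical value. Endow $\Gamma$ with the complete metric $d(\gamma_1,\gamma_2)=\max_{t\in[0,1]}\|\gamma_1(t)-\gamma_2(t)\|_V$, and consider $J:\Gamma\to\R$ defined by $J(\gamma)=\max_{t\in[0,1]}F(\gamma(t))$; the lower semi-continuity of $F$ together with the compactness of $[0,1]$ yields the lower semi-continuity of $J$ on $(\Gamma,d)$. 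For every small $\varepsilon>0$, choose $\gamma\in\Gamma$ with $J(\gamma)\leq c+\varepsilon$ and apply Ekeland's variational principle to obtain $\gamma_\varepsilon\in\Gamma$ with $J(\gamma_\varepsilon)\leq c+\varepsilon$ and
\begin{equation*}
J(\tilde{\gamma})\geq J(\gamma_\varepsilon)-\sqrt{\varepsilon}\,d(\tilde{\gamma},\gamma_\varepsilon)\quad\text{for all }\tilde{\gamma}\in\Gamma.
\end{equation*}

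The heart of the argument is then to extract from $\gamma_\varepsilon$ a point $u_\varepsilon=\gamma_\varepsilon(t_\varepsilon)$ with $F(u_\varepsilon)=J(\gamma_\varepsilon)$ that satisfies the Szulkin approximate critical inequality
\begin{equation*}
\langle D\Phi(u_\varepsilon),u_\varepsilon-v\rangle+\psi(v)-\psi(u_\varepsilon)\geq -\sqrt{\varepsilon}\,\|u_\varepsilon-v\|_V,\quad v\in V.
\end{equation*}
To this end, fix $v\in V$ and for $h\in(0,1)$ perturb $\gamma_\varepsilon$ on a narrow subinterval around $t_\varepsilon$, replacing $\gamma_\varepsilon(t_\varepsilon)$ by the convex combination $(1-h)u_\varepsilon+hv$ via a continuous cutoff vanishing at $t=0$ and $t=1$; this keeps the perturbed path in $\Gamma$ while controlling $d(\tilde\gamma,\gamma_\varepsilon)\leq h\|v-u_\varepsilon\|_V$. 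Exploiting the convexity of $\psi$ and the differentiability of $\Phi$, one gets
\begin{equation*}
F((1-h)u_\varepsilon+hv)\leq F(u_\varepsilon)+h[\psi(v)-\psi(u_\varepsilon)]-h\langle D\Phi(u_\varepsilon),v-u_\varepsilon\rangle+o(h),
\end{equation*}
and inserting this bound into the Ekeland inequality, dividing by $h$ and sending $h\to 0^+$ yields the claimed approximate criticality. Taking $\varepsilon=\varepsilon_k\to 0^+$ produces a Palais-Smale sequence at level $c$, and the \textbf{(PS)} hypothesis then furnishes a critical point $u\in V$ with $F(u)=c$, contradicting the assumption and completing the proof.

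The delicate step will be the path-perturbation argument: one must properly localize the cutoff near $t_\varepsilon$, deal with the possibility that $\{t:F(\gamma_\varepsilon(t))=J(\gamma_\varepsilon)\}$ is not a single point (which may require a selection argument or passing through an upper-semicontinuous maximizer map), and verify the one-sided upper bound on $F((1-h)u_\varepsilon+hv)$ using only the convexity of $\psi$ rather than any differentiability. Once these technicalities are handled, the remainder of the argument is standard, since (PS) does all the compactness work for us.
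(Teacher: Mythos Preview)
The paper does not prove this theorem at all: it is stated as a known result and attributed to Szulkin \cite{szulkin1986minimax} (see the sentence ``The next two results are due to Szulkin'' immediately preceding the statement). So there is no in-paper proof to compare your attempt against.

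That said, your outline is essentially the strategy Szulkin himself uses: Ekeland's principle on the complete metric space of paths, followed by a localized convex perturbation to extract the approximate Szulkin inequality at a maximizing point, and then the (PS) condition to close. Two small remarks. First, your claim that $c\geq\alpha>0$ does not follow from hypothesis $(iii)$ as written: $(iii)$ only gives $F(u)>0$ pointwise on the sphere, not a uniform positive lower bound, so the infimum $\alpha$ could be $0$; fortunately the theorem only asserts that $c$ is a critical value, not that $c>0$, so this step is unnecessary. Second, the ``delicate step'' you flag---selecting a good $t_\varepsilon$ when the maximizer set is not a singleton and making the cutoff argument rigorous---is genuinely the technical core of Szulkin's proof, and your one-line convexity bound for $F((1-h)u_\varepsilon+hv)$ glosses over the interaction between the localized perturbation and the global maximum of $F\circ\tilde\gamma$; in Szulkin's paper this is handled through a more careful argument than a single-point perturbation. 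Your sketch is on the right track but would need those details filled in to stand as a proof.
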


\begin{thm}\label{y1}
    Suppose that $F: V\to (-\infty,+\infty]$ is as in \eqref{energy-functional} and satisfies the Palais-Smale condition. If $F$ is bounded from below, then $c=\inf_{u\in V}F(u)$ is a critical value.
\end{thm}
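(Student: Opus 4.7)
The plan is to prove Theorem \ref{y1} by combining Ekeland's variational principle with the Palais–Smale condition, in the standard Szulkin fashion.

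First, since $F$ is bounded from below and proper, the value $c = \inf_{u \in V} F(u) \in \R$ is well-defined. I would select a minimizing sequence $\{u_n\} \subset \dom(\psi)$ with $F(u_n) \to c$. To this sequence I would apply Ekeland's variational principle (in its strong form): for each $n$ large, there exists $v_n \in V$ with $F(v_n) \leq F(u_n)$, $\|v_n - u_n\|_V \leq \sqrt{\varepsilon_n}$, and
\begin{equation*}
F(v_n) \leq F(w) + \sqrt{\varepsilon_n}\,\|w - v_n\|_V \qquad \text{for every } w \in V,
\end{equation*}
where $\varepsilon_n := F(u_n) - c \to 0^+$. In particular $F(v_n) \to c$ and each $v_n \in \dom(\psi)$.

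The key step is converting this Ekeland inequality into the approximate critical-point inequality that appears in the definition of $(\mathbf{PS})$. Fix $w \in V$ and $t \in (0,1]$, and plug $w_t := v_n + t(w - v_n)$ into the Ekeland estimate. Using the convexity of $\psi$, one has $\psi(w_t) \leq (1-t)\psi(v_n) + t\psi(w)$, and therefore
\begin{equation*}
\psi(w) - \psi(v_n) + \frac{\Phi(v_n) - \Phi(w_t)}{t} \geq -\sqrt{\varepsilon_n}\,\|w - v_n\|_V.
\end{equation*}
Letting $t \to 0^+$ and using $\Phi \in C^1(V,\R)$, the difference quotient tends to $-\langle D\Phi(v_n), w - v_n \rangle$, yielding
\begin{equation*}
\langle D\Phi(v_n), v_n - w \rangle + \psi(w) - \psi(v_n) \geq -\sqrt{\varepsilon_n}\,\|v_n - w\|_V,
\end{equation*}
which is precisely the Palais–Smale approximation condition with $\varepsilon_n$ replaced by $\sqrt{\varepsilon_n} \to 0$.

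Finally, since $F(v_n) \to c$ and $\{v_n\}$ satisfies the above approximate inequality for every $w \in V$, the Palais–Smale hypothesis yields a subsequence $v_{n_k} \to v$ in $V$. Passing to the limit in the inequality, using continuity of $D\Phi$, continuity of the norm, and lower semicontinuity of $\psi$ (on the right-hand side one passes to the limit directly, while for the $\psi(v_n)$ term one uses $\liminf \psi(v_{n_k}) \geq \psi(v)$ together with $F(v_{n_k}) \to c$ and continuity of $\Phi$ to conclude $\psi(v_{n_k}) \to \psi(v)$), one obtains
\begin{equation*}
\langle D\Phi(v), v - w \rangle + \psi(w) - \psi(v) \geq 0 \qquad \text{for all } w \in V,
\end{equation*}
so that $v$ is a critical point in the sense of Definition \ref{def3}, and $F(v) = \lim F(v_{n_k}) = c$. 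The main technical obstacle is the limiting argument for $\psi$: because $\psi$ is only lower semicontinuous, justifying $\psi(v_{n_k}) \to \psi(v)$ (rather than merely $\liminf \psi(v_{n_k}) \geq \psi(v)$) requires combining the lower semicontinuity of $\psi$ with the strong convergence of $v_{n_k}$, the continuity of $\Phi$, and the fact that $F(v_{n_k}) \to c = F(v)$, which together pin down the limit.
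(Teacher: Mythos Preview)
Your argument is correct and is essentially the original proof due to Szulkin \cite{szulkin1986minimax}: Ekeland's variational principle applied to the lower semicontinuous functional $F=\psi-\Phi$, followed by the convexity trick $\psi(w_t)\le(1-t)\psi(v_n)+t\psi(w)$ to extract the approximate critical-point inequality, and then the \textbf{(PS)} hypothesis to obtain a strongly convergent subsequence whose limit is a genuine critical point at level $c$. The only minor cosmetic point is that one should not define $\varepsilon_n$ as $F(u_n)-c$ (which could vanish) but simply take any sequence $\varepsilon_n\downarrow 0$ in Ekeland's principle; this does not affect the argument.

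Note that the paper itself does not supply a proof of Theorem~\ref{y1}: it is stated there as a quoted result of Szulkin, so there is no ``paper's own proof'' to compare against. Your write-up is precisely the standard proof that the citation points to.
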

Inspired by a variational principle in \cite{moameni2018variational}, we have the following result.
\begin{thm}\label{t1}
	Let $V=\mathbb{X}_{rad}\cap L^p_a(B_1)\cap L^q_b(B_1)$, and let $K$ be the convex and weakly closed subset of $V$ defined in \eqref{convex-set}. Suppose the following two assertions hold:
	\begin{itemize}
		\item [$(i)$] The functional $I_K$ has a critical point $\overline{u}\in V$ in the sense of Definition \ref{def3}, and;
		\item [$(ii)$] there exist $\overline{v}\in K$ with $\cN_s\overline{v}=0$ in $\R^N\setminus\overline{B}_1$ and $\frac{\partial\overline{v}}{\partial\nu}=0$ on $\partial B_1$ such that
		\begin{equation}\label{v-tilde-equation}
		\cL\overline{v}+\overline{v}+b(|x|)|\overline{v}|^{q-2}\overline{v}=D\Phi(\overline{u})=a(|x|)|\overline{u}|^{p-2}\overline{u},
		\end{equation}
		in the weak sense namely,
		\begin{align}\label{v-tilde-weak-formulation}
		\nonumber\int_{B_1}\nabla\overline{v}\cdot\nabla\phi\ dx+\int_{B_1}\overline{v}\phi\ dx+\iint_{\cQ}\frac{(\overline{v}(x)-\overline{v}(y))(\phi(x)-\phi(y))}{|x-y|^{N+2s}}\ &dxdy+\int_{B_1}b(|x|)|\overline{v}|^{q-2}\overline{v}\phi\ dx\\
        &=\int_{B_1}D\Phi(\overline{u})\phi\ dx, \quad\forall\phi\in V.
		\end{align}
	\end{itemize}
Then $\overline{u}\in K$ is a weak solution of the equation
\begin{equation}\label{u1}
\cL u+u=a(|x|)|u|^{p-2}u-b(|x|)|u|^{q-2}u
\end{equation}
with Neumann conditions. 
\end{thm}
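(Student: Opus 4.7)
The plan is to turn the Szulkin variational inequality satisfied by $\overline{u}$ into an equality by testing against the auxiliary function $\overline{v}$, and then exploit strict convexity of $\Psi$ to conclude $\overline{u}=\overline{v}$. Once this identification is secured, the desired equation \eqref{u1} and the Neumann conditions are inherited from $\overline{v}$ with no extra work.

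Concretely, since $\overline{u}$ is a critical point of $I_K=\Psi_K-\Phi$ in the sense of Definition \ref{def3}, it lies in $K$ and satisfies
\[
\langle D\Phi(\overline{u}),\overline{u}-v\rangle+\Psi_K(v)-\Psi_K(\overline{u})\geq 0\quad\text{for all }v\in V.
\]
The first step I would take is to plug in $v=\overline{v}$, which is admissible because $\overline{v}\in K$ by hypothesis; since $\Psi_K(\overline{u})=\Psi(\overline{u})$ and $\Psi_K(\overline{v})=\Psi(\overline{v})$, this yields
\[
\Psi(\overline{u})-\Psi(\overline{v})\leq \langle D\Phi(\overline{u}),\overline{u}-\overline{v}\rangle.
\]
Next I would test the weak formulation \eqref{v-tilde-weak-formulation} against $\phi=\overline{u}-\overline{v}\in V$. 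Comparing with the explicit formula for $D\Psi$ computed earlier in the excerpt, the left-hand side equals exactly $D\Psi(\overline{v})(\overline{u}-\overline{v})$, while the right-hand side is precisely $\langle D\Phi(\overline{u}),\overline{u}-\overline{v}\rangle$. Chaining the two displays gives
\[
\Psi(\overline{u})-\Psi(\overline{v})\leq D\Psi(\overline{v})(\overline{u}-\overline{v}).
\]

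Convexity of $\Psi$ provides the reverse (tangent-line) inequality, so equality must hold throughout. Since $\Psi$ is in fact strictly convex on $V$---the quadratic part $\tfrac12\|u\|_{\mathbb{X}}^2$ is the square of a Hilbert norm and hence strictly convex, while $\tfrac{1}{q}\int_{B_1}b(|x|)|u|^q\,dx$ is convex for $q\geq 2$---the equality case in the subgradient inequality forces $\overline{u}=\overline{v}$. Substituting back in \eqref{v-tilde-equation} then gives \eqref{u1} weakly, and the Neumann conditions on $\overline{v}$ transfer verbatim to $\overline{u}$. The main technical point to verify is the admissibility of $\phi=\overline{u}-\overline{v}$ as a test function in \eqref{v-tilde-weak-formulation}, which reduces to the finiteness of $\int_{B_1}a(|x|)|\overline{u}|^{p-1}|\overline{u}-\overline{v}|\,dx$ and $\int_{B_1}b(|x|)|\overline{v}|^{q-1}|\overline{u}-\overline{v}|\,dx$; both follow from $\overline{u},\overline{v}\in V=\mathbb{X}_{rad}\cap L^p_a(B_1)\cap L^q_b(B_1)$ via H\"older's inequality. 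I do not anticipate a deeper obstacle, as no compactness or limiting argument is required in this deduction.
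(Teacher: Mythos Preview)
Your argument is correct and follows the same strategy as the paper: test the Szulkin inequality with $v=\overline{v}$, test the weak formulation \eqref{v-tilde-weak-formulation} with $\phi=\overline{u}-\overline{v}$, and combine the two to force $\overline{u}=\overline{v}$ via convexity. The only cosmetic difference is that you invoke the strict convexity of $\Psi$ abstractly, whereas the paper expands all the integrals and verifies the needed convexity inequalities by hand (using $\tfrac{1}{q}|t_2|^q-\tfrac{1}{q}|t_1|^q\geq |t_1|^{q-2}t_1(t_2-t_1)$ for the $b$-term and the polarization identity for the quadratic part) to arrive at $\tfrac12\|\overline{v}-\overline{u}\|_{\mathbb{X}}^2\leq 0$.
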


\begin{proof}
	Since by assumption $(i)$ $\overline{u}$ is a critical point of $I_K$, then by Definition \ref{def3}, we have
	\begin{align*}
	&\frac{1}{2}\int_{B_1}(|\nabla\phi|^2+|\phi|^2)\ dx+\frac{1}{2}\iint_{\cQ}\frac{(\phi(x)-\phi(y))^2}{|x-y|^{N+2s}}\ dxdy+\frac{1}{q}\int_{B_1}b(|x|)|\phi|^q\ dx\\
    &-\frac{1}{2}\int_{B_1}(|\nabla\overline{u}|^2+|\overline{u}|^2)\ dx-\frac{1}{2}\iint_{\cQ}\frac{(\overline{u}(x)-\overline{u}(y))^2}{|x-y|^{N+2s}}\ dxdy-\frac{1}{q}\int_{B_1}b(|x|)|\overline{u}|^q\ dx\\
	&\geq \langle D\Phi(\overline{u}),\phi-\overline{u}\rangle=:\int_{B_1}D\Phi(\overline{u})(\phi-\overline{u})\ dx, \quad\quad\forall\phi\in K.
	\end{align*}
	By taking in particular $\phi=\overline{v}$, the above inequality becomes
    \begin{align}\label{l1}
	\nonumber&\frac{1}{2}\int_{B_1}(|\nabla\overline{v}|^2+|\overline{v}|^2)\ dx+\frac{1}{2}\iint_{\cQ}\frac{(\overline{v}(x)-\overline{v}(y))^2}{|x-y|^{N+2s}}\ dxdy+\frac{1}{q}\int_{B_1}b(|x|)|\overline{v}|^q\ dx\\
    \nonumber&-\frac{1}{2}\int_{B_1}(|\nabla\overline{u}|^2+|\overline{u}|^2)\ dx-\frac{1}{2}\iint_{\cQ}\frac{(\overline{u}(x)-\overline{u}(y))^2}{|x-y|^{N+2s}}\ dxdy-\frac{1}{q}\int_{B_1}b(|x|)|\overline{u}|^q\ dx\\
	&\geq\int_{B_1}D\Phi(\overline{u})(\overline{v}-\overline{u})\ dx.
	\end{align}
	We now use $\phi=\overline{v}-\overline{u}$ as a test function in \eqref{v-tilde-weak-formulation} to get
	\begin{align}\label{l2}
	\nonumber\int_{\Omega}\nabla\overline{v}\cdot\nabla(\overline{v}-\overline{u})\ dx+\int_{B_1}\overline{v}(\overline{v}-\overline{u})&+\iint_{\cQ}\frac{(\overline{v}(x)-\overline{v}(y))((\overline{v}-\overline{u})(x)-(\overline{v}-\overline{u})(y))}{|x-y|^{N+2s}}\ dxdy\\
    &+\int_{B_1}b(|x|)|\overline{v}|^{q-2}\overline{v}(\overline{v}-\overline{u})\ dx=\int_{\Omega}D\Phi(\overline{u})(\overline{v}-\overline{u})\ dx.
	\end{align}
	Plugging \eqref{l2} into \eqref{l1}, we get
	\begin{align}\label{l3}
	\nonumber&\frac{1}{2}\int_{B_1}(|\nabla\overline{v}|^2+|\overline{v}|^2)\ dx+\frac{1}{2}\iint_{\cQ}\frac{(\overline{v}(x)-\overline{v}(y))^2}{|x-y|^{N+2s}}\ dxdy+\frac{1}{q}\int_{B_1}b(|x|)|\overline{v}|^q\ dx\\
    \nonumber&-\frac{1}{2}\int_{B_1}(|\nabla\overline{u}|^2+|\overline{u}|^2)\ dx-\frac{1}{2}\iint_{\cQ}\frac{(\overline{u}(x)-\overline{u}(y))^2}{|x-y|^{N+2s}}\ dxdy-\frac{1}{q}\int_{B_1}b(|x|)|\overline{u}|^q\ dx\\
    \nonumber&\geq\int_{\Omega}\nabla\overline{v}\cdot\nabla(\overline{v}-\overline{u})\ dx+\int_{B_1}\overline{v}(\overline{v}-\overline{u})+\iint_{\cQ}\frac{(\overline{v}(x)-\overline{v}(y))((\overline{v}-\overline{u})(x)-(\overline{v}-\overline{u})(y))}{|x-y|^{N+2s}}\ dxdy\\
    &+\int_{B_1}b(|x|)|\overline{v}|^{q-2}\overline{v}(\overline{v}-\overline{u})\ dx.
	\end{align}
    Note that  $t\mapsto f(t)=\frac{1}{q}|t|^q$ is convex. Therefore, for all $t_1, t_2\in\R$,
    \begin{equation}\label{property-of-covex-functions}
        f(t_2)-f(t_1)\geq f'(t_1)(t_2-t_1)=|t_1|^{q-2}t_1(t_2-t_1).
    \end{equation}
    By substituting $t_1=\overline{v}$ and $t_2=\overline{u}$ in \eqref{property-of-covex-functions}, we have
    \begin{equation*}
        \frac{1}{q}|\overline{u}|^q-\frac{1}{q}|\overline{v}|^q\geq |\overline{v}|^{q-2}\overline{v}(\overline{u}-\overline{v}).
    \end{equation*}
    Then, multiplying the above inequality by $b(|x|)$ and integrating over $B_1$, one gets
    \begin{equation*}
        \frac{1}{q}\int_{B_1}b(|x|)|\overline{u}|^q\ dx-\frac{1}{q}\int_{B_1}b(|x|)|\overline{v}|^q\ dx\geq \int_{B_1}b(|x|)|\overline{v}|^{q-2}\overline{v}(\overline{u}-\overline{v})\ dx.
    \end{equation*}
    Taking this into account, it follows from \eqref{l3} that
    \begin{align}\label{l3'}
	\nonumber&\frac{1}{2}\int_{B_1}|(\nabla\overline{v}|^2+|\overline{v}|^2)\ dx+\frac{1}{2}\iint_{\cQ}\frac{(\overline{v}(x)-\overline{v}(y))^2}{|x-y|^{N+2s}}\ dxdy\\
    \nonumber&-\frac{1}{2}\int_{B_1}(|\nabla\overline{u}|^2+|\overline{u}|^2)\ dx-\frac{1}{2}\iint_{\cQ}\frac{(\overline{u}(x)-\overline{u}(y))^2}{|x-y|^{N+2s}}\ dxdy\\
    &\geq\int_{B_1}\nabla\overline{v}\cdot\nabla(\overline{v}-\overline{u})\ dx+\int_{B_1}\overline{v}(\overline{v}-\overline{u})+\iint_{\cQ}\frac{(\overline{v}(x)-\overline{v}(y))((\overline{v}-\overline{u})(x)-(\overline{v}-\overline{u})(y))}{|x-y|^{N+2s}}\ dxdy.
	\end{align}
    Using the elementary identity
    \begin{align*}
        ((\overline{v}-\overline{u})(x)-(\overline{v}-\overline{u})(y))^2&=-(\overline{v}(x)-\overline{v}(y))^2+(\overline{u}(x)-\overline{u}(y))^2\\
        &+2(\overline{v}(x)-\overline{v}(y))((\overline{v}-\overline{u})(x)-(\overline{v}-\overline{u})(y)),
    \end{align*}
    it follows from \eqref{l3'} that
	\begin{equation*}
	\frac{1}{2}\int_{B_1}|\nabla(\overline{v}-\overline{u})|^2\ dx+\frac{1}{2}\int_{B_1}|\overline{v}-\overline{u}|^2\ dx+\frac{1}{2}\iint_{\cQ}\frac{((\overline{v}-\overline{u})(x)-(\overline{v}-\overline{u})(y))^2}{|x-y|^{N+2s}}\ dxdy\leq0
	\end{equation*}
	and therefore, $\overline{v}-\overline{u}=0$ i.e., $\overline{v}=\overline{u}$ a.e., in $\R^N$. We then deduce from \eqref{v-tilde-equation} that $\overline{u}$ satisfies in the weak sense equation \eqref{u1}. Moreover, since $\overline{v}$ satisfies Neumann conditions, so does $\overline{u}$. The proof is therefore finished. 
\end{proof}
We now collect a series of lemmas that will play a key role in the proof of our main results. Let us start with the following.
\begin{lemma}\label{llm}
The following assertions hold:
    \begin{itemize}
        \item [$(i)$] There exists a constant $C_*>0$ such that
        \begin{equation*}
            \|u\|_{L^{\infty}(B_1)}\leq C_{*}\|u\|_{\mathbb{X}},\quad\quad\text{for all}~~u\in K;
        \end{equation*}
        \item[$(ii)$] There exists a constant $C>0$ such that
        \begin{equation}
            \|u\|_{\mathbb{X}}\leq\|u\|_{V}\leq C\|u\|_{\mathbb{X}},\quad\quad\text{for all}~~ u\in K.
        \end{equation}
    \end{itemize}
\end{lemma}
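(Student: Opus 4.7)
The plan is to derive $(i)$ from the monotonicity encoded in $K$ combined with the $H^1$ piece of the $\mathbb{X}$-norm, and then obtain $(ii)$ directly from $(i)$ together with the integrability of $a$ and $b$.

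For $(i)$, I would fix $u \in K$ and consider its radial profile $r \mapsto u(r)$. Because this profile is non-decreasing on $(0,1)$, for any $r_0 \in (0,1)$ we have $u \geq u(r_0)$ on $B_1 \setminus B_{r_0}$, and hence
\[u(r_0)^2 \, |B_1 \setminus B_{r_0}| \leq \int_{B_1 \setminus B_{r_0}} u^2\, dx \leq \|u\|_{L^2(B_1)}^2.\]
Choosing $r_0 = 1/2$ yields $u(1/2) \leq C_N \|u\|_{L^2(B_1)}$. To pass from this pointwise value to the supremum, I would write $u(r) = u(1/2) + \int_{1/2}^r u'(\rho)\, d\rho$ for $r \in [1/2,1)$, apply Cauchy--Schwarz, and use that $\rho^{N-1}$ is bounded below on $[1/2,1]$ to estimate $\int_{1/2}^1 u'(\rho)^2 \, d\rho$ by a constant multiple of $\|\nabla u\|_{L^2(B_1)}^2$. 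Combined with $u \leq u(1/2)$ on $B_{1/2}$ (again by monotonicity), this gives $\|u\|_{L^\infty(B_1)} \leq C_* \|u\|_{H^1(B_1)} \leq C_* \|u\|_{\mathbb{X}}$.

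For $(ii)$, the left inequality $\|u\|_{\mathbb{X}} \leq \|u\|_V$ is immediate from the definition of $\|\cdot\|_V$. For the right inequality, I would invoke $(i)$ together with $a, b \in L^1(0,1)$ to obtain
\[\|u\|_{L^p_a(B_1)}^p \leq \|u\|_{L^\infty(B_1)}^p \int_{B_1} a(|x|)\, dx \leq C\, \|u\|_{\mathbb{X}}^p,\]
and the analogous estimate with $(p,a)$ replaced by $(q,b)$. Summing the three contributions that make up $\|u\|_V$ then produces $\|u\|_V \leq C \|u\|_{\mathbb{X}}$.

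The only delicate point is that radial $H^1(B_1)$ functions need not be continuous at the origin, so the pointwise evaluations and the fundamental-theorem-of-calculus step must be restricted to the annulus $B_1 \setminus B_{1/2}$, where the weight $\rho^{N-1}$ is uniformly positive and $1$D Sobolev embedding makes the radial profile continuous. I expect no further obstacle: the fractional seminorm piece of $\|u\|_{\mathbb{X}}$ plays no role, since the $H^1$ control alone, combined with monotonicity, already suffices.
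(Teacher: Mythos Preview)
Your proof is correct, and for part~$(ii)$ it is essentially identical to the paper's argument. For part~$(i)$, however, you take a genuinely different route. The paper follows \cite{brasco2016global} and works with the fractional piece of the norm: it invokes the trace inequality for $H^s(B_\rho\setminus B_{r_0})$ (which requires $s>\tfrac12$) to control $|u|$ on each sphere $\partial B_\rho$, and then uses the radial monotonicity only at the very end to replace $\|u\|_{L^\infty(B_1\setminus B_{1/2})}$ by $\|u\|_{L^\infty(B_1)}$. You instead ignore the fractional seminorm entirely and exploit only the $H^1$ part: monotonicity bounds $u(1/2)$ by the $L^2$ norm, and the one-dimensional fundamental theorem of calculus plus Cauchy--Schwarz (with the weight $\rho^{N-1}$ bounded below on $[1/2,1]$) controls $u(r)-u(1/2)$ by $\|\nabla u\|_{L^2}$. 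Your argument is more elementary and makes transparent that the restriction $s>\tfrac12$ plays no role in this lemma; the paper's approach, on the other hand, is the one that survives in the purely nonlocal setting where no gradient term is available.
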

\begin{proof}
    We start by proving assertion $(i)$. We follow the lines of that of \cite[Lemma 4.3]{brasco2016global}. Let $r_0<\rho<1$. Utilising the fact that $u$ is radial, $s>\frac{1}{2}$, and the trace inequality for $H^s(B_{\rho}\setminus  B_{r_0})$ (see e.g. \cite[Section 3.3.3]{serra2011monotonicity}), it follows that for every $x\in\partial B_{\rho}$,
    \begin{align*}
    |u(x)|^2&=\frac{\rho^{1-N}}{N\omega_N}\int_{\partial B_{\rho}}|u|^2\ d\cH^{N-1}\\
    &\leq c\frac{\rho^{1-N}}{N\omega_N}\rho^{2s-1}\Big(|u|^2_{H^s(B_{\rho}\setminus B_{r_0})}+\frac{1}{\rho^{2s}}\|u\|^2_{L^2(B_{\rho}\setminus B_{r_0})}\Big),
    \end{align*}
    where $\omega_N$ is the volume of the $N$-dimensional unit sphere in $\R^N$ and $d\cH^{N-1}$ denotes the $(N-1)$-dimensional Hausdorff measure. The above inequality reduces to
    \begin{align*}
        |u(x)|&\leq c\rho^{-s}|x|^{\frac{2s-N}{2}}\|u\|_{H^s(B_{\rho}\setminus B_{r_0})}\quad\quad\text{if}~~\rho=|x|\\
        &\leq c(1+r_0^{-s})r_0^{\frac{2s-N}{2}}\|u\|_{H^s(B_1\setminus B_{r_0})}
    \end{align*}
    Hence, 
    \begin{equation}\label{u2}
        \|u\|_{L^{\infty}(B_1\setminus B_{r_0})}\leq c_{r_0}\|u\|_{H^s(B_1\setminus B_{r_0})}\leq c_{r_0}\|u\|_{H^s(B_1)}\leq c_{r_0}\|u\|_{\mathbb{X}}
    \end{equation}
    where $c_{r_0}:=c(1+r_0^{-s})r_0^{\frac{2s-N}{2}}$. Now, since $u$ is radially non-decreasing, then $\|u\|_{L^{\infty}(B_1)}=\|u\|_{L^{\infty}(B_1\setminus B_{\frac{1}{2}})}$. Taking this into account, we substitute $r_0=\frac{1}{2}$ in \eqref{u2} to get that
    \begin{equation*}
        \|u\|_{L^{\infty}(B_1)}\leq C_*\|u\|_{\mathbb{X}},
    \end{equation*}
    where $C_*=c_{\frac{1}{2}}$. This completes the proof of assertion $(i)$.

    Regarding the proof of $(ii)$, the inequality $\|u\|_{\mathbb{X}}\leq \|u\|_{V}$ is trivial by definition of the norm $\|\cdot\|_{V}$. Now, to obtain the second inequality, we use $(i)$ to see that
    \begin{align*}
        \|u\|_{V}&=\|u\|_{\mathbb{X}}+\|u\|_{L^p_a(B_1)}+\|u\|_{L^q_b(B_1)}\\
        &\leq\|u\|_{\mathbb{X}}+\|u\|_{L^{\infty}(B_1)}\Big(\|a\|^{\frac{1}{p}}_{L^1(B_1)}+\|b\|^{\frac{1}{q}}_{L^1(B_1)}\Big)\\
        &\leq C\|u\|_{\mathbb{X}}
    \end{align*}
    where $C=1+C_{*}\Big(\|a\|^{\frac{1}{p}}_{L^1(B_1)}+\|b\|^{\frac{1}{q}}_{L^1(B_1)}\Big)$, as desired.
\end{proof}

\begin{lemma}\label{mpg}
    The functional $I_K$ defined in \eqref{restricted-euler-lagrange-function} satisfies the \textbf{(PS)} condition if either of the following assertions hold:
    \begin{itemize}
        \item [$(i)$] $p>q$;
        \item[$(ii)$] If $p<q$ then $\Big(\frac{a^q}{b^p}\Big)^{\frac{1}{q-p}}\in L^1(0,1)$.
    \end{itemize}
\end{lemma}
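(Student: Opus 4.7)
\medskip

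\noindent\textbf{Proof proposal.} Let $\{u_k\}$ be a Palais--Smale sequence for $I_K$; since $\Psi_K(u_k)<\infty$, each $u_k$ lies in $K$. The plan is to show, in order, (a) boundedness in $V$, (b) extraction of a weak limit $\overline u\in K$, and (c) strong convergence in $V$.

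\medskip

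\noindent\emph{Boundedness.} Since $K$ is a convex cone, the test function $v=(1+\lambda)u_k$ lies in $K$ for every $\lambda>0$. Plugging it into the PS inequality, dividing by $\lambda$, and letting $\lambda\to 0^+$ yields
\[
\|u_k\|_{\mathbb{X}}^2+\int_{B_1}b(|x|)|u_k|^q\,dx-\int_{B_1}a(|x|)|u_k|^p\,dx\ \ge\ -\varepsilon_k\|u_k\|_V.
\]
Set $X=\|u_k\|_{\mathbb{X}}^2$, $Y=\int a|u_k|^p\,dx$, $Z=\int b|u_k|^q\,dx$. Combined with $I(u_k)=\tfrac12 X+\tfrac1q Z-\tfrac1p Y=c+o(1)$ one obtains
\[
\frac{p-2}{2p}X+\frac{p-q}{pq}Z\ \le\ c+o(1)+\frac{\varepsilon_k}{p}\|u_k\|_V.
\]
In case $(i)$, $p>q\ge 2$ makes both coefficients strictly positive, and since $\|u_k\|_V\le C\|u_k\|_{\mathbb X}$ by Lemma~\ref{llm}$(ii)$, an absorption argument gives $X\le C'$. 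In case $(ii)$, $p<q$, the term $\tfrac{p-q}{pq}Z$ is negative, so I would instead use Hölder with exponents $q/p$ and $q/(q-p)$:
\[
Y=\int \frac{a}{b^{p/q}}\cdot b^{p/q}|u_k|^p\,dx\ \le\ \Bigl(\int\!\bigl(\tfrac{a^q}{b^p}\bigr)^{\!\frac1{q-p}}\Bigr)^{\!\frac{q-p}{q}} Z^{p/q}=:C_1 Z^{p/q}.
\]
Substituting into the energy identity and using $p/q<1$ forces $Z$ to be bounded; then $Y$, then $X$, are bounded. Lemma~\ref{llm}$(ii)$ then gives $\|u_k\|_V\le C$.

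\medskip

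\noindent\emph{Weak limit and pointwise convergence.} Since $\mathbb{X}$ is a Hilbert space and $K$ is convex and strongly (hence weakly) closed in $\mathbb{X}$, a subsequence satisfies $u_k\rightharpoonup\overline u$ in $\mathbb{X}$ with $\overline u\in K$. The uniform bound $\|u_k\|_{L^\infty(B_1)}\le C_*\|u_k\|_{\mathbb{X}}$ from Lemma~\ref{llm}$(i)$ together with Helly's selection theorem applied to the radial profiles (monotone functions on $[0,1]$) furnishes pointwise a.e.\ convergence $u_k\to\overline u$ on $B_1$, up to a further subsequence. Combining this with the uniform $L^\infty$ bound and the integrability of $a$, $b$, dominated convergence gives
\[
\int_{B_1}a|u_k|^p\,dx\to\int_{B_1}a|\overline u|^p\,dx,\qquad \int_{B_1}b|u_k|^q\,dx\to\int_{B_1}b|\overline u|^q\,dx,
\]
and $\langle D\Phi(u_k),u_k-\overline u\rangle=\int a|u_k|^{p-2}u_k(u_k-\overline u)\,dx\to 0$.

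\medskip

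\noindent\emph{Strong convergence.} Testing the PS inequality with $v=\overline u\in K$ gives
\[
\Psi(u_k)\ \le\ \Psi(\overline u)+\langle D\Phi(u_k),u_k-\overline u\rangle+\varepsilon_k\|u_k-\overline u\|_V,
\]
hence $\limsup_k\Psi(u_k)\le\Psi(\overline u)$. Weak lower semicontinuity of $\Psi$ then forces $\Psi(u_k)\to\Psi(\overline u)$. Since the $\int b|u|^q$ piece of $\Psi$ already converges, we obtain $\|u_k\|_{\mathbb{X}}^2\to\|\overline u\|_{\mathbb{X}}^2$. Weak convergence plus norm convergence in a Hilbert space yields $u_k\to\overline u$ strongly in $\mathbb{X}$. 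The weighted $L^p_a$ and $L^q_b$ convergence already noted then upgrades this to $u_k\to\overline u$ in $V$, proving \textbf{(PS)}.

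\medskip

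\noindent\emph{Main obstacle.} The genuinely delicate point is the boundedness step in case $(ii)$: the standard Ambrosetti--Rabinowitz arithmetic breaks down when $p<q$, and the loss of a coercive sign on $Z$ must be compensated by the sub-homogeneous Hölder estimate $Y\lesssim Z^{p/q}$, which is precisely what the integrability hypothesis $(a^q/b^p)^{1/(q-p)}\in L^1(0,1)$ is designed to provide.
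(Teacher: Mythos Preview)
Your proof is correct and follows essentially the same route as the paper's: the same H\"older trick in case~$(ii)$, the same test $v=\overline u$ to upgrade weak to strong convergence, and in case~$(i)$ your limiting test $v=(1+\lambda)u_k$, $\lambda\to0^+$, is just the differential version of the paper's fixed choice $v=tu_k$ with $t>1$. The only cosmetic differences are that you obtain pointwise a.e.\ convergence via Helly's selection theorem on the monotone profiles (the paper uses the compact embedding $\mathbb{X}\hookrightarrow L^2(B_1)$) and that you get convergence of the $b$--weighted integral directly by dominated convergence rather than by a $\liminf/\limsup$ sandwich.
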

\begin{proof}
    We follow the lines of that of \cite[Lemma 3.4]{moameni2019existence}. Let $\{u_k\}\subset K$ be a sequence such that $I_K(u_k)\to c\in \R$ and
    \begin{equation}\label{u7}
        \langle D\Phi(u_k), u_k-v\rangle+\Psi_K(v)-\Psi_K(u_k)\geq-\varepsilon_k\|u_k-v\|_{V},~~~\forall v\in V.
    \end{equation}
    We shall show that $\{u_k\}$ possesses a convergent subsequence in $V$. For that, we first show that $u_k$ is bounded in $V$. We distinguish the cases $p>q$ and $p<q$.

    \textbf{Case 1.} $p>q$. Since $I_K(u_k)\to c$, then for $k$ sufficiently large, one has
    \begin{equation}\label{u8}
        \frac{1}{2}\|u_k\|^2_{\mathbb{X}}+\frac{1}{q}\|u_k\|^q_{L^q_b(B_1)}-\frac{1}{p}\int_{B_1}a(|x|)|u_k|^p\ dx\leq c+1.
    \end{equation}
    We now introduce the function $g(t)=t^q-p(t-1)-1$ for $t\in (1,+\infty)$. Then, there exists $\ov t=(\frac{p}{q})^{\frac{q}{q-1}}$ such that for every $t\in(1,\ov t)$, $g(t)<0$. Henceforth, by choosing such a number, we have $t>1$ and $t^{q}-1<p(t-1)$.

    Now, substituting $v=tu_k$ in \eqref{u7} and recalling that $\langle D\Phi(u_k), u_k\rangle=\int_{B_1}a(|x|)u_k(x)^p\ dx$, we have that
    \begin{equation}\label{u9-1}
        \frac{(1-t^2)}{2}\|u\|^2_{\mathbb{X}}+\frac{(1-t^q)}{q}\|u_k\|^q_{L^q_b(B_1)}+(t-1)\int_{B_1}a(|x|)|u_k|^p\ dx\leq \varepsilon_k (t-1)\|u_k\|_{V}\leq C\|u_k\|_{V}.
    \end{equation}
    Recalling that $t^{q}-1<p(t-1)$, then we let $\beta>0$ so that
    \begin{equation*}
        \frac{1}{p(t-1)}<\beta<\frac{1}{t^q-1}.
    \end{equation*}
    Multiplying \eqref{u9-1} and summing it up with \eqref{u8}, we have
    \begin{equation*}
        \frac{1+\beta(1-t^2)}{2}\|u_k\|^2_{\mathbb{X}}+\frac{1+\beta(1-t^q)}{q}\|u_k\|^q_{L^q_b(B_1)}+\Big(\beta(t-1)-\frac{1}{p}\Big)\int_{B_1}a(|x|)|u_k|^p\ dx\leq c+1+\beta c\|u_k\|_{V}.
    \end{equation*}
    By the choice of $\beta$, the fact that $t>1$ and $q\geq2$ then 
    the constants on the left-hand side of the inequality above are positive. Therefore,
    \begin{equation*}
        \|u_k\|^2_{\mathbb{X}}+C_1\|u_k\|^q_{L^q_b(B_1)}+C_2\int_{B_1}a(|x|)|u_k|^p\ dx\leq C_3+C_4\|u_k\|_{V}.
    \end{equation*}
     for suitable constants $C_i>0,~i\in\{1,2,3,4\}$. From assertion $(ii)$ of Lemma \ref{llm}, we have
     \begin{equation*}
         \|u_k\|^2_{\mathbb{X}}\leq C_3+C_4\|u_k\|_{V}\leq C_3+cC_4\|u_k\|_{\mathbb{X}}
     \end{equation*}
     and thus, $\{u_k\}$ is bounded in $\mathbb{X}$. We then conclude that $\{u_k\}$ is also bounded in $V$.

     \textbf{Case 2.} $p<q$. Using the H\"{o}lder inequality with exponents $\frac{q}{q-p}$ and $\frac{q}{p}$, we have
     \begin{align*}
         \int_{B_1}a(|x|)|u_k|^p\ dx&=\int_{B_1}a(|x|)b(|x|)^{-\frac{p}{q}} b(|x|)^{\frac{p}{q}}|u_k|^p\ dx\\
         &\leq \|a\cdot b^{-\frac{p}{q}}\|_{L^{\frac{q}{q-p}}(B_1)}\Big(\int_{B_1}b(|x|)|u_k|^q\Big)^{\frac{p}{q}}.
     \end{align*}
     Hence,
     \begin{align}\label{k1}
      \nonumber I_K(u_k)&\geq \frac{1}{2}\int_{B_1}(|\nabla u_k|^2+|u|^2)\ dx+\frac{1}{2}\iint_{\cQ}\frac{(u_k(x)-u_k(y))^2}{|x-y|^{N+2s}}\ dxdy+\frac{1}{q}\int_{B_1}b(|x|)|u_k|^q\ dx\\
    \nonumber&~~~~~~~~~~~~~~~~~~~~~~~~~~-\frac{1}{p}\|a\cdot b^{-\frac{p}{q}}\|_{L^{\frac{q}{q-p}}(B_1)}\Big(\int_{B_1}b(|x|)|u_k|^q\Big)^{\frac{p}{q}}\\
         &=\frac{1}{2}\|u_k\|^2_{\mathbb{X}}+\frac{1}{q}\|u_k\|^q_{L^{q}_b(B_1)}-\frac{1}{p}\|a\cdot b^{-\frac{p}{q}}\|_{L^{\frac{q}{q-p}}(B_1)}\|u_k\|^p_{L^q_b(B_1)}.
     \end{align}
     Since by assumption $p<q$ and $\|a\cdot b^{-\frac{p}{q}}\|_{L^{\frac{q}{q-p}}(B_1)}<\infty$, then we deduce from the inequality above that $I_K$ is bounded from below and coercive in $V$ thanks to Lemma \ref{llm} $(ii)$. Therefore, we deduce that $\{u_k\}$ is bounded in $V$, as desired.

     So, the sequence $\{u_k\}$ is bounded in $V$ for both $p>q$ and $p<q$. Hence, up to a subsequence, there exists $\overline{u}\in V$ such that $u_k\rightharpoonup \overline{u}$ weakly in $\mathbb{X}$ and $u_k\to \overline{u}$ strongly in $L^2(B_1)$ thanks to the compact embedding $\mathbb{X}\hookrightarrow L^2(B_1)$. In particular, $u_k\to \overline{u}$ a.e. in $B_1$. Moreover, we also have $u_k\rightharpoonup \overline{u}$ weakly in $L^p_a(B_1)$ resp. $L^q_b(B_1)$. Notice that from Lemma \ref{llm} $(i)$, the sequence $\{u_k\}$ is also bounded in $L^{\infty}(B_1)$. Moreover, since each $u_k$ is radial, then $\overline{u}$ is radial as well. Thus, $\overline{u}\in K$.

     We now wish to prove that
     \begin{equation}\label{u9}
         u_k\to \overline{u}\quad\text{strongly in}~~V.
     \end{equation}
     For that, we mention first that by the lower semi-continuity property, one has
     \begin{align}\label{u10}
         \|\overline{u}\|_{\mathbb{X}}\leq \liminf_{k\to\infty}\|u_k\|_{\mathbb{X}}\quad\quad\text{and}\quad\quad \|\overline{u}\|_{L^q_b(B_1)}\leq \liminf_{k\to\infty}\|u_k\|_{L^q_b(B_1)}.
     \end{align}
     Now, taking $v=\overline{u}$ in \eqref{u7} gives
     \begin{align}\label{u11}
        \frac{1}{2}(\|\overline{u}\|^2_{\mathbb{X}}-\|u_k\|^2_{\mathbb{X}})+\frac{1}{q}(\|\overline{u}\|^q_{L^q_b(B_1)}-\|u_k\|^q_{L^q_b(B_1)})+\int_{B_1}a(|x|)|u_k|^{p-2}u_k(u_k-\overline{u})\ dx\geq-\varepsilon_k\|u_k-\overline{u}\|_{V}. 
     \end{align}
     Since $\|u_k\|_{L^{\infty}(B_1)}$ is bounded, then
     \begin{equation*}
         \left\{\begin{aligned}
             a(|x|)|u_k|^{p-1}|u_k-\overline{u}|&\leq a(|x|)\|u_k\|^{p-1}_{L^{\infty}(B_1)}(\|u_k\|_{L^{\infty}(B_1)}+\|\overline{u}\|_{L^{\infty}(B_1)})\leq C a(|x|),~~\text{and}\\
             a(|x|)|u_k|^p&\leq a(|x|)\|u_k\|^p_{L^{\infty}(B_1)}\leq Ca(|x|)
         \end{aligned}
         \right.
     \end{equation*}
     for some $C>0$. Now, using that $a\in L^1$, then the dominated convergence theorem yields
     \begin{align}\label{u12}
        \lim_{k\to\infty} \int_{B_1}a(|x|)|u_k|^{p-2}u_k(u_k-\overline{u})\ dx=0~~~\text{and}~~~\lim_{k\to\infty}\int_{B_1}a(|x|)|u_k|^p\ dx=\int_{B_1}a(|x|)|\overline{u}|^p\ dx.
     \end{align}
     From \eqref{u11} and \eqref{u12}, it follows that
     \begin{equation}\label{u13}
         \frac{1}{2}\Big(\limsup_{k\to\infty}\|u_k\|^2_{\mathbb{X}}-\|\overline{u}\|^2_{\mathbb{X}}\Big)+\frac{1}{q}\Big(\limsup_{k\to\infty}\|u_k\|^q_{L^q_b(B_1)}-\|\overline{u}\|^q_{L^q_b(B_1)}\Big)\leq0.
     \end{equation}
     Combining \eqref{u13} together with \eqref{u10}, we deduce that
     \begin{equation}\label{u14}
         \lim_{k\to\infty}\|u_k\|^2_{\mathbb{X}}=\|\overline{u}\|^2_{\mathbb{X}}\quad\quad\text{and}\quad\quad \lim_{k\to\infty}\|u_k\|^q_{L^q_b(B_1)}=\|\overline{u}\|^q_{L^q_b(B_1)}.
     \end{equation}
     Finally, from \eqref{u14} and \eqref{u12} we obtain \eqref{u9}, as desired.
\end{proof}
The next lemma plays a key role in establishing assertion $(ii)$ in Theorem \ref{t1}.
\begin{lemma}\label{lllm}
    Let $g\in L^1(0,1)$ be a non-negative monotone function. Then there exists a sequence of smooth monotone functions $\{g_m\}$ with the property that $0\leq g_m\leq g$ and $g_m\to g$ strongly in $L^1(0,1)$.
\end{lemma}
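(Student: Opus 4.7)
My plan is to obtain $g_m$ by mollifying $g$ with a \emph{one-sided} smooth mollifier, choosing the direction of support so that the resulting convolution lies pointwise below $g$. Specifically, pick $\rho\in C_c^\infty(\R)$ with $\supp\rho\subset[0,1]$, $\rho\geq 0$, $\int_\R\rho=1$, and set $\rho_m(t):=m\rho(mt)$, so that $\supp\rho_m\subset[0,1/m]$. Extend $g$ by zero outside $(0,1)$ (still denoted $g$, so $g\in L^1(\R)$) and define $g_m:=g*\rho_m$; this is automatically smooth and non-negative.

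Suppose first that $g$ is non-decreasing. For $t\in(0,1)$,
\[
g_m(t)=\int_0^{1/m}g(t-y)\,\rho_m(y)\,dy.
\]
For every $y\in[0,1/m]$ one has $g(t-y)\leq g(t)$: when $t-y>0$ this follows from the monotonicity of $g$ on $(0,1)$, and when $t-y\leq 0$ the zero extension gives $g(t-y)=0\leq g(t)$. Integrating against $\rho_m$ yields $g_m(t)\leq g(t)$. The same case analysis, applied to $t_1<t_2$ in $(0,1)$ with a common $y\in[0,1/m]$, shows $g(t_1-y)\leq g(t_2-y)$, so $g_m$ is non-decreasing on $(0,1)$. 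Standard mollification theory gives $\|g_m-g\|_{L^1(\R)}\to 0$, and hence $\|g_m-g\|_{L^1(0,1)}\to 0$.

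For $g$ non-increasing, the symmetric construction with $\supp\rho_m\subset[-1/m,0]$ works; one can argue directly in exactly the same fashion, or simply reduce to the non-decreasing case by the change of variable $t\mapsto 1-t$. The only delicate point --- and the reason the mollifier must be one-sided rather than symmetric --- is that the zero extension of a non-decreasing $g$ is not globally monotone, as it drops to zero at $t=1$; by choosing $\supp\rho_m\subset[0,1/m]$, the convolution defining $g_m(t)$ only ever sees values $g(t-y)$ with $t-y\leq t<1$, so the endpoint discontinuity never enters the estimates. This is the main subtlety, but it is handled for free by the one-sided choice of $\rho_m$.
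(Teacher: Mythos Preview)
Your proof is correct and follows essentially the same route as the paper: both arguments use a one-sided mollifier so that the convolution of a non-decreasing function only samples values to the left, which simultaneously preserves monotonicity and yields the pointwise bound $g_m\le g$. The only difference is that the paper first truncates via $q_m=\min\{g,m\}$ before mollifying, whereas you mollify $g$ directly; this extra truncation is not needed for the lemma as stated, so your streamlined version works.
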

\begin{proof}
    The proof of this lemma can be found in \cite{cowan2017existence}. We sketch it here for the sake of completeness. Without loss of generality, we focus on the case when $g$ is monotone non-decreasing. The same argument works if $g$ is monotone non-increasing.
    
    For large $m$, we define $[0,\infty)\ni r\mapsto q_m(r)=\min\{g(r),m\}$. Then, $q_m$ is non-decreasing in $(0,1)$ for large $m$. We now extend $q_m(r)$ to $q_m(1)$ for $r>1$ and $q_m=0$ for $r<0$. Let $\eta\geq0$ be a smooth function with $\eta=0$ on $(-\infty,-1)\cup (0,\infty)$ and $\eta>0$ on $(-1,0)$. Assume also that $\int_{-1}^{0}\eta(t)\ dt=1$.
    For $\varepsilon>0$, we set $\eta_{\varepsilon}(r)=\frac{1}{\varepsilon}\eta(\frac{r}{\varepsilon})$ and define
    \begin{equation*}
        q^{\varepsilon}_m(r):=\int_{-\varepsilon}^{0}\eta_{\varepsilon}(t)q_m(r+t)\ dt. 
    \end{equation*}
    Since $q_m$ is non-decreasing, it follows that for every fixed small $\varepsilon>0$, $q^{\varepsilon}_m$ is non-decreasing in $r$. Moreover, notice that
    \begin{equation*}
        0\leq q^{\varepsilon}_m(r)=\int_{-\varepsilon}^{0}\eta_{\varepsilon}(t)q_m(r+t)\ dt\leq q_m(r)\int_{-\varepsilon}^{0}\eta_{\varepsilon}(t)\ dt=q_m(r)\leq g(r).
    \end{equation*}
    We now let $\varepsilon_m\searrow0$ and we define $g_m(r):=q_m^{\varepsilon_m}(r)$. Thus, from the inequality above, we have $0\leq g_m(r)\leq g(r)$ for all $m$. Moreover, $r\mapsto g_m(r)$ is non-decreasing. Finally, one can now show that $g_m\to g$ in $L^1(0,1)$.
\end{proof}

\begin{lemma}\label{llllm}
    Suppose that $\overline{u}\in K$. Then there exists $v\in K$ solving
    \begin{equation}\label{e4}
     \left\{\begin{aligned}
     \cL v+v+b(|x|)|v|^{q-2}v&=a(|x|)|\overline{u}|^{p-2}\overline{u}\quad\quad~\text{in}~~B_1 \\
     \cN_sv&=0 \quad\quad\quad\quad\quad\quad\quad~\text{in}~~\R^N\setminus\overline{B}_1\\
     \frac{\partial v}{\partial\nu}&=0 \quad\quad\quad\quad\quad\quad\quad~\text{on}~~\partial B_1.
     \end{aligned}
     \right.
	\end{equation}
 in the weak sense.
\end{lemma}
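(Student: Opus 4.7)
The plan is to construct $v$ as the unique minimizer of the strictly convex functional
\begin{equation*}
J(w) := \frac{1}{2}\|w\|_{\mathbb{X}}^2 + \frac{1}{q}\int_{B_1} b(|x|)|w|^q\,dx - \int_{B_1} a(|x|)|\overline{u}|^{p-2}\overline{u}\,w\,dx,
\end{equation*}
whose Euler--Lagrange equation is exactly the weak formulation \eqref{v-tilde-weak-formulation} of \eqref{e4}. Since $\overline{u}\in K$ is bounded in $L^\infty(B_1)$ by Lemma \ref{llm}(i), the forcing $f:=a(|x|)|\overline{u}|^{p-2}\overline{u}$ lies in $L^1(B_1)$ and on $K$ Lemma \ref{llm} gives $|\int f w|\leq C\|w\|_{\mathbb{X}}$. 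Combined with the non-negative, strictly convex, weakly lower semicontinuous $\mathbb{X}$-norm and $L^q_b$ terms, this makes $J$ coercive and weakly lsc on the convex weakly closed cone $K$. The direct method of the calculus of variations then yields a unique minimizer $v\in K$.

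The next step is to show that $v$ solves \eqref{v-tilde-weak-formulation} against all test functions $\phi\in V$, not just $\phi\in K$. Radiality of $v$ is a consequence of the rotational invariance of $J$ and strict convexity: $v\circ R$ is also a minimizer for every $R\in O(N)$, hence equal to $v$. Non-negativity is obtained by the standard truncation $v^+=\max(v,0)$: the pointwise inequalities $|\nabla v^+|\leq|\nabla v|$ a.e.\ and $(v^+(x)-v^+(y))^2\leq(v(x)-v(y))^2$ give $\|v^+\|_{\mathbb{X}}\leq\|v\|_{\mathbb{X}}$, while $\int b|v^+|^q\leq \int b|v|^q$ and $\int f v^+\geq \int f v$ (as $f\geq 0$). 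Hence $J(v^+)\leq J(v)$ and uniqueness forces $v=v^+$.

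The main obstacle is establishing the monotonicity $v(r_1)\leq v(r_2)$ for $0\leq r_1\leq r_2<1$, and in particular the upgrade of the variational inequality on $K$ to the full equation on $V$. As emphasized in the Introduction, the action of $\cL$ on radial functions does not reduce to an ODE, so the ODE/sliding methods used for the Laplacian in \cite{serra2011monotonicity,bonheure2012increasing,moameni2019existence} are not directly available. The plan is to approximate: by Lemma \ref{lllm}, pick $0\leq a_m\leq a$ smooth non-decreasing with $a_m\to a$ in $L^1(0,1)$, and $0\leq b_m\leq b$ smooth non-increasing with $b_m\to b$ in $L^1(0,1)$, and let $v_m\in K$ be the corresponding minimizer of $J_m$. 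For smooth coefficients the forcing $f_m:=a_m(|x|)|\overline{u}|^{p-2}\overline{u}$ is non-negative, radial and non-decreasing in $|x|$, and one shows that $v_m$ is in fact the unconstrained minimizer on the radial part of $\mathbb{X}$ by a rearrangement argument: if $v_m$ were not non-decreasing, exchanging its values between two annular regions where monotonicity fails would, thanks to the monotonicity of $a_m$, $b_m$ and $\overline u$, strictly lower $J_m$, contradicting minimality. Hence $v_m$ satisfies the weak form of the approximate equation in $V$.

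Finally, testing the Euler--Lagrange equation for $v_m$ against $v_m$ and using $a_m\leq a$, $b_m\leq b$, $\overline u\in L^\infty$ gives a uniform bound $\|v_m\|_{\mathbb{X}}\leq C$. Along a subsequence $v_m\rightharpoonup v_\infty$ weakly in $\mathbb{X}$ and a.e.\ in $B_1$; weak closedness of $K$ in $V$ ensures $v_\infty\in K$, and a standard passage to the limit in the weak formulation shows that $v_\infty$ solves \eqref{e4} in the weak sense on $V$. Strict monotonicity of the operator $w\mapsto \cL w+w+b|w|^{q-2}w$ guarantees uniqueness of solutions, so $v_\infty=v$, and therefore $v\in K$ is the desired weak solution of \eqref{e4}.
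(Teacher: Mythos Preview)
Your overall strategy---approximate $a,b$ by smooth monotone $a_m,b_m$ via Lemma~\ref{lllm}, solve the approximate problem, show the approximate solution $v_m$ lies in $K$, and pass to the limit---is exactly the paper's scheme. The difference, and the gap, is in how you establish that $v_m$ is non-decreasing.

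You write that ``if $v_m$ were not non-decreasing, exchanging its values between two annular regions where monotonicity fails would \ldots\ strictly lower $J_m$.'' This rearrangement step is not justified and, as stated, is likely false. A swap of values on two annuli has no reason to decrease the Dirichlet energy $\int_{B_1}|\nabla w|^2\,dx$, and still less the nonlocal seminorm $\iint_{\cQ}\frac{(w(x)-w(y))^2}{|x-y|^{N+2s}}\,dxdy$: the known P\'olya--Szeg\H{o}--type inequalities concern symmetric \emph{decreasing} rearrangement on $\R^N$, not increasing rearrangement on a ball with Neumann conditions, and they do not cover ad hoc ``exchanges.'' The monotonicity of $a_m,b_m,\overline u$ helps only with the zero-order terms, not with $\|w\|_{\mathbb X}^2$. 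So as written the argument does not close; this is precisely the point flagged in the paper's comment that ``the action of $\cL$ on radial functions does not give rise to an ODE,'' so the local rearrangement/ODE tricks from \cite{moameni2019existence} do not transfer.

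The paper avoids rearrangement entirely. It minimizes $J_m$ \emph{unconstrained} over $\mathbb{X}_{rad}$ (so $v_m$ solves the weak equation automatically), proves $v_m\ge 0$ by testing with $v_m^-$, and then proves monotonicity by a weak maximum-principle argument at the level of the equation: for each $r\in(\bar r,1)$ one tests \eqref{z2} with $\phi=(v_m(|x|)-v_m(r))^+\,\mathbbm{1}_{\{\bar r<|x|\le r\}}$ (or with $(v_m(|x|)-v_m(r))^-\,\mathbbm{1}_{\{r<|x|<1\}}$, depending on the sign of $f_m(\overline u(r))-v_m(r)-b_m(r)|v_m(r)|^{q-2}v_m(r)$), and uses the elementary inequality $(w(x)-w(y))(\phi(x)-\phi(y))\ge(\phi(x)-\phi(y))^2$ together with the monotonicity of $a_m,b_m,\overline u$ to force $\phi\equiv 0$. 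This yields the dichotomy ``$v_m\le v_m(r)$ on $(\bar r,r)$'' or ``$v_m\ge v_m(r)$ on $(r,1)$,'' which implies monotonicity. If you replace your rearrangement paragraph by this test-function argument, the rest of your outline (uniform $\mathbb X$-bounds, weak limit, dominated convergence for the lower-order terms) matches the paper and goes through; the final identification ``$v_\infty=v$'' via strict monotonicity is then unnecessary, since $v_\infty\in K$ already solves \eqref{e4}.
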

This lemma is the mixed local and nonlocal version of \cite[Lemma 3.6]{moameni2019existence}. As we will show below, to get the monotonicity property of $v$, we use a different strategy to that of \cite[Lemma 3.6]{moameni2019existence}. In fact, in \cite[Lemma 3.6]{moameni2019existence}, the authors take advantage of the fact that the regularity of $v$ can be upgraded to $H^3(B_1)$ so that the equation can be differentiated with respect to the radial variable. Notice also that toward this, they used the fact that the equation can be transformed into an ODE. However, we do not have such properties in the present case.
\begin{proof}
    According to Lemma \ref{lllm}, there exists a sequence $\{a_m\}$ (resp. $\{b_m\}$) of smooth functions such that $0\leq a_m\leq a$ (resp. $0\leq b_m\leq b$) and every $a_m$ is non-decreasing (resp. every $b_m$ is non-increasing) on $(0,1)$ and $a_m\to a$ strongly in $L^1(0,1)$ (resp. $b_m\to b$ strongly in $L^1(0,1)$). We now consider the equation
    \begin{equation}\label{e4-1}
     \left\{\begin{aligned}
     \cL v+v+b_m(|x|)|v|^{q-2}v&=a_m(|x|)|\overline{u}|^{p-2}\overline{u}\quad\quad~\text{in}~~B_1 \\
     \cN_sv&=0 \quad\quad\quad\quad\quad\quad\quad~~~~\text{in}~~\R^N\setminus\overline{B}_1\\
     \frac{\partial v}{\partial\nu}&=0 \quad\quad\quad\quad\quad\quad\quad~~~~\text{on}~~\partial B_1.
     \end{aligned}
     \right.
	\end{equation}
 Let $J: \mathbb{X}_{rad}\to \R$ be the functional defined by
 \begin{align*}
     J(v)&:=\frac{1}{2}\int_{B_1}(|\nabla v|^2+|v|^2)\ dx+\frac{1}{2}\iint_{\cQ}\frac{(v(x)-v(y))^2}{|x-y|^{N+2s}}\ dxdy\\
     &~~~~~~~~~~~+\frac{1}{q}\int_{B_1}b_m(|x|)|v|^q\ dx-\int_{B_1}f_m(\overline{u})v\ dx,
 \end{align*}
 where $f_m(\ov u)=a_m(|x|)|\overline{u}|^{p-2}\overline{u}$. Then, $J$ is convex, lower semi-continuous and 
 \begin{equation*}
     \lim_{\|v\|_{\mathbb{X}}\to \infty}J(v)=\infty.
 \end{equation*}
 Hence, $J$ achieves its minimum at some $v_m\in \mathbb{X}_{rad}$. Moreover, $v_m$ satisfies the Neumann conditions 
 \begin{equation}\label{nc}
     \cN_s v_m=0~~~\text{in}~~\R^N\setminus\overline{B}_1\quad\quad\text{and}\quad\quad \frac{\partial v_m}{\partial\nu}=0~~~\text{on}~~\partial B_1.
 \end{equation}
 Indeed, since $v_m$ is the minimum of $J$ in $\mathbb{X}_{rad}$, it is also a critical point of $J$ in $\mathbb{X}_{rad}$. Therefore, it satisfies
 \begin{align}\label{z1}
   \nonumber \int_{B_1}\nabla v_m\cdot \nabla\phi\ dx&+\int_{B_1}v_m\phi\ dx+\iint_{\cQ}\frac{(v_m(x)-v_m(y))(\phi(x)-\phi(y))}{|x-y|^{N+2s}}\ dxdy\\
     &+\int_{B_1}b_m(|x|)|v_m|^{q-2}v_m\phi\ dx=\int_{B_1}f_m(\overline{u})\phi\ dx,\quad\quad \forall\phi\in \mathbb{X}.
 \end{align}
 We now take $\phi\equiv0$ in $\overline{B}_1$ and from \eqref{z1}, it follows that
 \begin{align*}
     0=\iint_{\cQ}\frac{(v_m(x)-v_m(y))(\phi(x)-\phi(y))}{|x-y|^{N+2s}}\ dxdy&=2\int_{\R^N\setminus \overline{B}_1}\Bigg(\int_{B_1}\frac{v_m(x)-v_m(y)}{|x-y|^{N+2s}}\ dy\Bigg)\phi(x)\ dx\\
     &=2\int_{\R^N\setminus\overline{B}_1}\cN_s v_m(x)\phi(x)\ dx,\quad\quad\forall\phi.
 \end{align*}
 In particular, $\cN_sv_m=0$ a.e. in $\R^N\setminus\overline{B}_1$. On the other hand, from the integration by parts formula
 \begin{equation}\label{integration-by-parts}
     \left\{\begin{aligned}
         \iint_{\cQ}\frac{(v_m(x)-v_m(y))(\phi(x)-\phi(y))}{|x-y|^{N+2s}}\ dxdy&=\int_{B_1}(-\Delta)^sv_m\phi\ dx+\int_{\R^N\setminus B_1}\cN_s v_m\phi\ dx;\\
         \int_{B_1}\nabla v_m\cdot\nabla\phi\ dx&=\int_{B_1}(-\Delta)v_m\phi\ dx+\int_{\partial B_1}\frac{\partial v_m}{\partial\nu}\phi\ d\sigma,\quad\quad\forall\phi,
     \end{aligned}
     \right.
 \end{equation}
 we deduce from \eqref{z1} that
 \begin{equation}\label{d-1}
     \int_{B_1}(\cL v_m+v_m+b_{m}(|x|)|v_m|^{q-2}v_m-f_m(\overline{u}))\phi\ dx+\int_{\R^N\setminus B_1}\cN_sv_m\phi\ dx+\int_{\partial B_1}\frac{\partial v_m}{\partial\nu}\phi\ d\sigma=0~~~\forall\phi. 
 \end{equation}
 In particular, by taking $\phi\equiv0$ in $\R^N\setminus\overline{B}_1$, \eqref{d-1} becomes
 \begin{equation*}
     \int_{B_1}(\cL v_m+v_m+b_{m}(|x|)|v_m|^{q-2}v_m-f_m(\overline{u}))\phi\ dx=0
 \end{equation*}
 and thus
 \begin{equation}\label{d-2}
     \cL v_m+v_m+b_{m}(|x|)|v_m|^{q-2}v_m=f_m(\overline{u})~~\text{a.e. in}~~B_1.
 \end{equation}
 Now, using \eqref{d-2}, the integration by parts formula \eqref{integration-by-parts}, and the fact that $\cN_s v_m=0$ in $\R^N\setminus \overline{B}_1$, we deduce from \eqref{z1} that
 \begin{equation*}
     \int_{\partial B_1}\frac{\partial v_m}{\partial\nu}\phi\ d\sigma=0\quad\quad\forall\phi,
 \end{equation*}
 and therefore, $\frac{\partial v_m}{\partial\nu}=0$ a.e. on $\partial B_1$.

 We now wish to show that $v_m\in K$. Notice first that $v_m\in\mathbb{X}\cap L^q_{b_m}(B_1)$ satisfies
 \begin{align}\label{z2}
   \nonumber &\int_{B_1}\nabla v_m\cdot \nabla\phi\ dx+\iint_{\cQ}\frac{(v_m(x)-v_m(y))(\phi(x)-\phi(y))}{|x-y|^{N+2s}}\ dxdy\\
     &~~~+\int_{B_1}v_m\phi\ dx+\int_{B_1}b_m(|x|)|v_m|^{q-2}v_m\phi\ dx=\int_{B_1}f_m(\overline{u})\phi\ dx,\quad\forall\phi\in \mathbb{X}\cap L^q_{b_m}(B_1).
 \end{align}
 Since $a$ is smooth and $\overline{u}\in K$, then from the first part of Lemma \ref{llm}, $f_m(\overline{u})\in L^{\infty}(B_1)$. Notice also that $f_m(\overline{u})\geq0$. We now claim that $v_m\geq0$. Indeed, to see this, we use $\phi=v_m^-$ as a test function in \eqref{z2} to get 
    \begin{align}\label{u4}
       \nonumber&\int_{B_1}\nabla v_m\cdot\nabla v_m^-\ dx+\iint_{\cQ}\frac{(v_m(x)-v_m(y))(v_m^-(x)-v_m^-(y))}{|x-y|^{N+2s}}\ dxdy\\
        &~~~~~~~~~~~~~~~~~+\int_{B_1}v_mv_m^-\ dx+\int_{B_1}b(x)|v_m|^{q-2}v_mv_m^-\ dx=\int_{B_1}f_m(\overline{u})u_m^-\ dx.
    \end{align}
    Since
    \begin{equation*}
        \left\{\begin{aligned}
            & \nabla v_m\cdot\nabla v_m^-=-|\nabla v_m^-|^2,~~ v_mv_m^-=-|v_m^-|^2~~\text{and}\\
            & (v_m(x)-v_m(y))(v_m^-(x)-v_m^-(y))\leq -(v_m^-(x)-v_m^-(y))^2,
        \end{aligned}
        \right.
    \end{equation*}
    then it follows from \eqref{u4} that
    \begin{align}\label{u5}
       \nonumber&\int_{B_1}|\nabla v_m^-|^2\ dx+\iint_{\cQ}\frac{(v_m^-(x)-v_m^-(y))2}{|x-y|^{N+2s}}\ dxdy\\
        &~~~~~~~~~~~~~~~~~+\int_{B_1}|v_m^-|^2\ dx+\int_{B_1}b(x)|v_m^-|^{q}\ dx\leq-\int_{B_1}f_m(\overline{u})v_m^-\ dx.
    \end{align}
    Since $f_m(\overline{u})\geq0$ and $b$ is non-negative by assumption, we deduce from \eqref{u5} that $v_m^-\equiv0$ a.e. that is $v_m\geq0$ a.e. in $B_1$. 
 
 Since $v_m$ is radial, then to have $v_m\in K$, it suffices to show that $v_m$ is non-decreasing with respect to the radial variable. For that, we follow some ideas in \cite{colasuonno2016p}. Fix $\ov r\in (0,1)$. Then to prove that $v_m$ is non-decreasing, it is enough\footnote{Indeed, if $v_m(t_0)>v_m(r)$ for some $\ov r< t_0<r$, then by continuity of $v_m$, there exists $t\in (t_0,r)$ for which $v_m(t_0)>v_m(t)>v_m(r)$. This violates both $(i)$ and $(ii)$.} to show that for every $r\in(\ov r,1)$, one of the following cases occurs:
 \begin{itemize}
     \item [$(i)$] $v_m(t)\leq v_m(r)$~~~\text{for all}~~~$t\in (\ov r,r)$
     \item[$(ii)$] $v_m(t)\geq v_m(r)$~~~\text{for all}~~~$t\in (r,1)$.
 \end{itemize}
 Now, if $f_m(\overline{u}(r))\leq v_m(r)+b_m(r)|v_m(r)|^{q-2}v_m(r)$, then we use
 \begin{equation*}
     \phi(x)=\left\{\begin{aligned}
         & (v_m(|x|)-v_m(r))^+~~~\quad\text{if}~~\ov r<|x|\leq r\\
         & 0\quad\quad\quad\quad\quad\quad\quad\quad\quad\quad\text{otherwise}
     \end{aligned}
     \right.
 \end{equation*}
 as a test function in \eqref{z2} to see that
\begin{align}\label{z3}
   \nonumber &\int_{B_r\setminus B_{\ov r}}\nabla v_m\cdot \nabla\phi\ dx+\iint_{\R^{2N}\setminus ((B_r\setminus B_{\ov r})^c)^2}\frac{(v_m(x)-v_m(y))(\phi(x)-\phi(y))}{|x-y|^{N+2s}}\ dxdy\\
     &~~~~~~~~~~~~+\int_{B_r\setminus B_{\ov r}}v_m\phi\ dx+\int_{B_r\setminus B_{\ov r}}b_m(|x|)|v_m|^{q-2}v_m\phi\ dx=\int_{B_r\setminus B_{\ov r}}f_m(\overline{u})\phi\ dx.
 \end{align}
 Now using that $\nabla v_m\cdot\nabla\phi=\nabla (v_m-v_m(r))\cdot\nabla\phi=|\nabla\phi|^2$, $v_m\phi=(v_m-v_m(r))\phi+v_m(r)\phi=|\phi|^2+v_m(r)\phi$ and from the elementary inequality
 \begin{equation*}
     (v_m(x)-v_m(y))(\phi(x)-\phi(y))=((v_m-v_m(r))(x)-(v_m-v_m(r))(y))(\phi(x)-\phi(y))\geq (\phi(x)-\phi(y))^2,
 \end{equation*}
there holds
\begin{align}
&\iint_{\R^{2N}\setminus ((B_r\setminus B_{\ov r})^c)^2}\frac{(v_m(x)-v_m(y))(\phi(x)-\phi(y))}{|x-y|^{N+2s}}\ dxdy\geq \iint_{\R^{2N}\setminus ((B_r\setminus B_{\ov r})^c)^2}\frac{(\phi(x)-\phi(y))^2}{|x-y|^{N+2s}}\ dxdy,\label{i1}\\
&\int_{B_r\setminus B_{\ov r}}\nabla v_m\cdot\nabla\phi\ dx=\int_{B_r\setminus B_{\ov r}}|\nabla\phi|^2\ dx,\label{i2}\\
&\int_{B_r\setminus B_{\ov r}}v_m\phi\ dx=\int_{B_r\setminus B_{\ov r}}|\phi|^2\ dx+\int_{B_r\setminus B_{\ov r}}v_m(r)\phi\ dx\label{i3}.
\end{align}
Next, using that $b$ is non-increasing, then
\begin{align}\label{z4}
   \nonumber \int_{B_r\setminus B_{\ov r}}b_m(|x|)|v_m|^{q-2}v_m\phi\ dx&\geq b_m(r)\int_{B_r\setminus B_{\ov r}}|v_m|^{q-2}v_m\phi\ dx\\
    &=b_m(r)\int_{B_r\setminus B_{\ov r}}(|v_m|^{q-2}v_m-|v_m(r)|^{q-2}v_m(r))\phi\ dx\\
 \nonumber &+\int_{B_r\setminus B_{\ov r}}b_m(r)|v_m(r)|^{q-2}v_m(r)\phi\ dx.
\end{align}
Let $f:[0,1]\to\R$ be given by
\begin{equation*}
    f(t)=|tv_m(x)+(1-t)v_m(r)|^{q-2}(tv_m(x)+(1-t)v_m(r)).
\end{equation*}
Then,
\begin{equation}
    |v_m|^{q-2}v_m-|v_m(r)|^{q-2}v_m(r)=f(1)-f(0)=\int_{0}^{1}f'(t)\ dt
\end{equation}
Now, a direct calculation shows that
\begin{equation*}
    f'(t)=(q-1)|tv_m(x)+(1-t)v_m(r)|^{q-2}(v_m(x)-v_m(r)).
\end{equation*}
So,
\begin{align}\label{p}
  \nonumber  &\int_{B_r\setminus B_{\ov r}}(|v_m|^{q-2}v_m-|v_m(r)|^{q-2}v_m(r))\phi\ dx\\
  \nonumber  &=(q-1)\int_{B_r\setminus B_{\ov r}}\Bigg(\int_{0}^{1}|tv_m(x)+(1-t)v_m(r)|^{q-2}\ dt\Bigg)(v_m(x)-v_m(r))\phi\ dx\\
    &=(q-1)\int_{B_r\setminus B_{\ov r}}\Bigg(\int_{0}^{1}|tv_m(x)+(1-t)v_m(r)|^{q-2}\ dt\Bigg)|\phi|^2\ dx.
\end{align}
Plugging \eqref{p} into \eqref{z4}, we get 
\begin{align}\label{z5}
   \nonumber \int_{B_r\setminus B_{\ov r}}b_m(|x|)|v_m|^{q-2}v_m\phi\ dx&\geq\int_{B_r\setminus B_{\ov r}}b_m(r)|v_m(r)|^{q-2}v_m(r)\phi\ dx\\
   \nonumber &+(q-1)b_m(r)\int_{B_r\setminus B_{\ov r}}\Bigg(\int_{0}^{1}|tv_m(x)+(1-t)v_m(r)|^{q-2}\ dt\Bigg)|\phi|^2\ dx\\
    &\geq \int_{B_r\setminus B_{\ov r}}b_m(r)|v_m(r)|^{q-2}v_m(r)\phi\ dx.
\end{align}
Now, from \eqref{z5}, \eqref{i3}, \eqref{i2} and \eqref{i1}, and that $\overline{u}$ is non-decreasing, it follows from \eqref{z3} that
\begin{align*}
    \int_{B_r\setminus B_{\ov r}}|\nabla\phi|^2\ dx&+\iint_{\R^{2N}\setminus ((B_r\setminus B_{\ov r})^c)^2}\frac{(\phi(x)-\phi(y))^2}{|x-y|^{N+2s}}\ dxdy+\int_{B_r\setminus B_{\ov r}}|\phi|^2\ dx\\
    &\leq \int_{B_r\setminus B_{\ov r}}f_m(\overline{u})\phi\ dx-\int_{B_r\setminus B_{\ov r}}v_m(r)\phi\ dx-\int_{B_r\setminus B_{\ov r}}b_m(r)|v_m(r)|^{q-2}v_m(r)\phi\ dx\\
    &\leq \int_{B_r\setminus B_{\ov r}}f_m(\overline{u}(r))\phi\ dx-\int_{B_r\setminus B_{\ov r}}v_m(r)\phi\ dx-\int_{B_r\setminus B_{\ov r}}b_m(r)|v_m(r)|^{q-2}v_m(r)\phi\ dx\\
    &=\int_{B_r\setminus B_{\ov r}}\Big(f_m(\overline{u}(r))-v_m(r)-b_m(r)|v_m(r)|^{q-2}v_m(r)\Big)\phi\ dx.
\end{align*}
Since by assumption the latter is non-positive, then $\phi\equiv0$, and thus $(i)$ holds.
By the same manner, when $f_m(\overline{u}(r)) > v_m(r)+b_m(r)|v_m(r)|^{q-2}v_m(r)$, we use as a test function
\begin{equation*}
     \phi(x)=\left\{\begin{aligned}
         & 0\quad\quad\quad\quad\quad\quad\quad\quad\quad\quad\text{if}~~\ov r<|x|\leq r\\
         &(v_m(|x|)-v_m(r))^-\quad\quad~\text{otherwise}
     \end{aligned}
     \right.
 \end{equation*}
 to show that case $(ii)$ holds. We then conclude that $v_m$ is non-decreasing and thus $v_m\in K$.

 Next, we show that $\{v_m\}$ is bounded in $V$. Substituting $\phi=v_m$ in \eqref{z2}, we have
 \begin{align}\label{z6}
   \nonumber\int_{B_1}|\nabla v_m|^2\ dx+\iint_{\cQ}\frac{(v_m(x)-v_m(y))^2}{|x-y|^{N+2s}}\ dxdy
     +\int_{B_1}|v_m|^2\ dx&+\int_{B_1}b_m(|x|)|v_m|^{q}\ dx\\
     &=\int_{B_1}f_m(\overline{u})v_m\ dx.
 \end{align}
 Now, using Lemma \ref{llm} together with the fact that $a_m\leq a$, $a\in L^1(0,1)$ and $\overline{u}, v_m\in K$, we have
 \begin{align}
   \nonumber \int_{B_1}f_m(\overline{u})v_m\ dx&=\int_{B_1}a_m(|x|)|\overline{u}|^{p-2}\overline{u}v_m\ dx\leq \int_{B_1}a(|x|)|\overline{u}|^{p-2}\overline{u}v_m\ dx\\
    &\leq\|\overline{u}\|^{p-1}_{L^{\infty}(B_1)}\|a\|_{L^1(B_1)}\|v_m\|_{L^{\infty}(B_1)}\leq C\|v_m\|_{\mathbb{X}},
 \end{align}
 where $C>0$ is a positive constant independent of $m$. Plugging this into \eqref{z6}, it follows that
 \begin{equation}
     \|v_m\|^2_{\mathbb{X}}+\int_{B_1}b_m(|x|)|v_m|^{q}\ dx\leq C\|v_m\|_{\mathbb{X}}.
 \end{equation}
 In particular, $v_m$ is bounded both in $\mathbb{X}$ and $L^q_b(B_1)$. Moreover, by Lemma \ref{llm}, we deduce that $\|v_m\|_{L^{\infty}(B_1)}$ is also bounded. Hence, $\{v_m\}$ is bounded in $V$. Therefore, up to a subsequence, there exists $0\leq v\in \mathbb{X}_{rad}$ such that $v_m\rightharpoonup v$ weakly in $\mathbb{X}$ and $v_m\to v$ strongly in $L^2(B_1)$ (thanks to the compact embedding $\mathbb{X}\hookrightarrow L^2(B_1)$). In particular, $v_m\to v$ a.e. in $B_1$. Since $v_m$ is non-decreasing for every $m$, then $v$ is also non-decreasing in the radial variable. Hence, by Lemma \ref{llm}, $v\in L^{\infty}(B_1)$ from which we obtain in particular that $v\in K$.

 To complete the proof, it remains to show that $v$ solves the problem \eqref{e4}. Let $\phi\in \mathbb{X}\cap L^{\infty}(B_1)$. Then from \eqref{z2}, it holds that
 \begin{align}\label{z7}
   \nonumber &\int_{B_1}\nabla v_m\cdot \nabla\phi\ dx+\iint_{\cQ}\frac{(v_m(x)-v_m(y))(\phi(x)-\phi(y))}{|x-y|^{N+2s}}\ dxdy\\
     &~~~+\int_{B_1}v_m\phi\ dx+\int_{B_1}b_m(|x|)|v_m|^{q-2}v_m\phi\ dx=\int_{B_1}f_m(\overline{u})\phi\ dx.
 \end{align}
 By weak convergence, we have
 \begin{align}\label{z8}
   \nonumber  \int_{B_1}\nabla v_m\cdot \nabla\phi\ dx+\iint_{\cQ}\frac{(v_m(x)-v_m(y))(\phi(x)-\phi(y))}{|x-y|^{N+2s}}\ dxdy+\int_{B_1}v_m\phi\ dx\\
     \to \int_{B_1}\nabla v\cdot \nabla\phi\ dx+\iint_{\cQ}\frac{(v(x)-v(y))(\phi(x)-\phi(y))}{|x-y|^{N+2s}}\ dxdy+\int_{B_1}v\phi\ dx.
 \end{align}
 On the other hand, from the boundedness of $\|v_m\|_{L^{\infty}(B_1)}$, we have
 \begin{equation*}
     |b_m|v_m|^{q-2}v_m\phi|\leq b\|v_m\|^{q-1}_{L^{\infty}(B_1)}\|\phi\|_{L^{\infty}(B_1)}\leq Cb\in L^1.
 \end{equation*}
 Thus, the dominated convergence theorem yields
 \begin{equation}\label{z9}
     \int_{B_1}b_m(|x|)|v_m|^{q-2}v_m\phi\ dx\to \int_{B_1}b(|x|)|v|^{q-2}v\phi\ dx.
 \end{equation}
 Then, passing to the limit in \eqref{z7} and recalling \eqref{z8} and \eqref{z9}, we have that
 \begin{align}\label{z10}
   \nonumber &\int_{B_1}\nabla v\cdot \nabla\phi\ dx+\iint_{\cQ}\frac{(v(x)-v(y))(\phi(x)-\phi(y))}{|x-y|^{N+2s}}\ dxdy\\
     &~~~+\int_{B_1}v\phi\ dx+\int_{B_1}b(|x|)|v|^{q-2}v\phi\ dx=\int_{B_1}a(|x|)|\overline{u}|^{p-2}\overline{u}\phi\ dx~~~\forall\phi\in\mathbb{X}\cap L^{\infty}(B_1).
 \end{align}
 By density, the latter holds for all $\phi\in\mathbb{X}\cap L^1_b(B_1)$. From this, we deduce that $v$ is a weak solution of \eqref{e4}. The proof is therefore finished.
\end{proof}
Having the above preliminary results, we now prove our main results.

\begin{proof}[Proof of Theorem \ref{first-main-result}]
    By Lemma \ref{mpg}, the functional $I_K$ satisfies the \textbf{(PS)} condition. We now prove that $I_K$ satisfies the mountain pass geometry. We first notice that $I_K(0)=0$. Let $e\in K$. We have
    \begin{align*}
        I_K(te)&=\frac{t^2}{2}\int_{B_1}(|\nabla e|^2+|e|^2)\ dx+\frac{t^2}{2}\iint_{\cQ}\frac{(e(x)-e(y))^2}{|x-y|^{N+2s}}\ dxdy\\
        &+\frac{t^q}{q}\int_{B_1}b(|x|)|e|^q\ dx-\frac{t^p}{p}\int_{B_1}a(|x|)|e|^p\ dx.
    \end{align*}
    For $t$ sufficiently large and recalling that $p>q\geq2$, we deduce from the above equation that $I_K(te)$ is negative. Now, let $u\in K$ with $\|u\|_{V}=\rho>0$. From Lemma \ref{llm}, there exists a positive constant $C>0$ such that
    \begin{equation}\label{z11}
        \|u\|_{\mathbb{X}}\leq \|u\|_{V}\leq C\|u\|_{\mathbb{X}}.
    \end{equation}
    Moreover, from the definition of $\|\cdot\|_V$, we have
    \begin{equation}\label{z12}
        \int_{B_1}a(|x|)|u|^p\ dx=\|u\|^p_{L^p_a(B_1)}\leq \|u\|^p_{V}.
    \end{equation}
    Hence, from \eqref{z11} and \eqref{z12}, it follows that
    \begin{align*}
        I_{K}(u)&=\frac{1}{2}\|u\|^2_{\mathbb{X}}+\frac{1}{q}\|u\|^q_{L^q_b(B_1)}-\frac{1}{p}\int_{B_1}a(|x|)|u|^p\ dx\\
        &\geq \frac{1}{2}\|u\|^2_{\mathbb{X}}-\frac{1}{p}\int_{B_1}a(|x|)|u|^p\ dx\\
        &\geq \frac{1}{2C^2}\|u\|^2_{V}-\frac{1}{p}\|u\|^p_{V}=\frac{1}{2C^2}\rho^2-\frac{1}{p}\rho^p>0,
    \end{align*}
    provided that $\rho>0$ is sufficiently small since $p>2$. Now, if $u\notin K$, then we immediately have that $I_K(u)>0$, thanks to the definition of $\Psi_K$, see \eqref{psi-k}. So, $I_K$ satisfies the mountain pass geometry. Hence, by Theorem \ref{mountain-pass-theorem}, the functional $I_K$ admits a nontrivial critical point $\overline{u}\in K$. Now by Lemma \ref{llllm}, there exists $v\in K$ satisfying in the weak sense the problem
    \begin{equation}\label{e4-44}
     \left\{\begin{aligned}
     \cL v+v+b(|x|)|v|^{q-2}v&=a(|x|)|\overline{u}|^{p-2}\overline{u}\quad\quad~\text{in}~~B_1 \\
     \cN_sv&=0 \quad\quad\quad\quad\quad\quad\quad~\text{in}~~\R^N\setminus\overline{B}_1\\
     \frac{\partial v}{\partial\nu}&=0 \quad\quad\quad\quad\quad\quad\quad~\text{on}~~\partial B_1.
     \end{aligned}
     \right.
	\end{equation}
 We now deduce from Theorem \ref{t1} that $\overline{u}$ is a solution of \eqref{e2} with $I_K(\overline{u})>0$. This completes the proof.
\end{proof}
  We shall also address the case $p<q$.  In this case, we need the following assumption:
\begin{itemize}
    \item [(C)] $\Big(\frac{a^q}{b^p}\Big)^{\frac{1}{q-p}}\in L^1(0,1)$ and there exists $e\in K$ such that $I_K(e)<0$.
\end{itemize}
Here is our result for the case $p<q.$

\begin{thm}\label{second-main-result}
    Assume that $(A)$, $(B)$, and $(C)$ hold. If $p<q$ then problem \eqref{e2} admits at least one positive non-decreasing radial solution.
\end{thm}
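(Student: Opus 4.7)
The plan is to replace the mountain pass argument used for Theorem \ref{first-main-result} by the Szulkin minimization result (Theorem \ref{y1}), exploiting the fact that for $p<q$ the functional $I_K$ is coercive and bounded from below on $V$. Indeed, the chain of inequalities already carried out in Case 2 of the proof of Lemma \ref{mpg} shows, under the integrability hypothesis in $(C)$, that
\begin{equation*}
I_K(u) \geq \frac{1}{2}\|u\|_{\mathbb{X}}^2 + \frac{1}{q}\|u\|_{L^q_b(B_1)}^q - \frac{1}{p}\Big\|a\,b^{-p/q}\Big\|_{L^{q/(q-p)}(B_1)} \|u\|_{L^q_b(B_1)}^p.
\end{equation*}
Since $p<q$, the right-hand side is coercive in $\|u\|_{L^q_b(B_1)}$, and Lemma \ref{llm}$(ii)$ then yields coercivity and a uniform lower bound for $I_K$ on all of $V$. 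Combined with the \textbf{(PS)} condition furnished by Lemma \ref{mpg}$(ii)$, Theorem \ref{y1} produces a critical point $\overline{u} \in V$ of $I_K$ with $I_K(\overline{u}) = \inf_{u\in V} I_K(u) =: c$. Because $I_K(\overline{u})<+\infty$, the definition \eqref{psi-k} of $\Psi_K$ forces $\overline{u} \in K$, so $\overline{u}$ is automatically non-negative, radial and non-decreasing.

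Next, I would invoke the second clause of $(C)$, namely the existence of $e \in K$ with $I_K(e)<0$, to guarantee $c \leq I_K(e) < 0 = I_K(0)$, so that $\overline{u}$ is nontrivial. From this point the argument follows exactly the pattern of the proof of Theorem \ref{first-main-result}: Lemma \ref{llllm} produces $\overline{v} \in K$ satisfying the Neumann conditions $\cN_s\overline{v}=0$ in $\R^N\setminus\overline{B}_1$ and $\partial\overline{v}/\partial\nu=0$ on $\partial B_1$, and solving
\begin{equation*}
\cL \overline{v} + \overline{v} + b(|x|)|\overline{v}|^{q-2}\overline{v} = a(|x|)|\overline{u}|^{p-2}\overline{u} \quad\text{in}~B_1
\end{equation*}
in the weak sense. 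Hypotheses $(i)$ and $(ii)$ of Theorem \ref{t1} are therefore both in force, and Theorem \ref{t1} identifies $\overline{u}$ with $\overline{v}$, so that $\overline{u}$ is a weak solution of \eqref{e2} with the required Neumann conditions.

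The only delicate point I anticipate is nontriviality of the minimizer, which is precisely the role of the auxiliary condition $I_K(e)<0$ built into $(C)$ and cannot be dispensed with, since $u\equiv 0$ is always an admissible critical point of $I_K$ with value zero. Once nontriviality is secured, positivity of $\overline{u}$ follows from its non-negativity, monotonicity and nontriviality (giving $\overline{u}(1)>0$) together with a standard maximum-principle argument for $\cL$ applied to the equation for $\overline{v}=\overline{u}$. In particular, the proof requires no new technical estimate beyond those already developed for the case $p>q$; the assumption $(C)$ is designed precisely so that the coercivity computation of Lemma \ref{mpg}$(ii)$ and the existence of a descent direction are both available.
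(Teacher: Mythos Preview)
Your proposal is correct and follows essentially the same route as the paper: coercivity and boundedness from below via the estimate \eqref{k1}, the \textbf{(PS)} condition from Lemma \ref{mpg}$(ii)$, Theorem \ref{y1} to obtain a minimizing critical point $\overline{u}\in K$, nontriviality from $I_K(\overline{u})\leq I_K(e)<0$, and then Lemma \ref{llllm} together with Theorem \ref{t1} to conclude. Your additional remark on strict positivity via a maximum-principle argument is a welcome supplement, as the paper's proof does not address this point explicitly.
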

\begin{proof}[Proof of Theorem \ref{second-main-result}]
    We set $\mu=\inf_{u\in V}I(u)$. From \eqref{k1}, we see that $I_K$ is bounded from below. Moreover, $I_K$ satisfies the \textbf{(PS)} compactness condition by Lemma \ref{mpg}. Thus, it follows from Theorem \ref{y1} that the infimum $\mu$ is achieved at some $\overline{u}\in K$. In particular, $\overline{u}$ is a critical point of $I_K$. It now follows from Lemma \ref{llllm} that there exists $v\in K$, a weak solution of
    \begin{equation}\label{e4-444}
     \left\{\begin{aligned}
     \cL v+v+b(|x|)|v|^{q-2}v&=a(|x|)|\overline{u}|^{p-2}\overline{u}\quad\quad~\text{in}~~B_1 \\
     \cN_sv&=0 \quad\quad\quad\quad\quad\quad\quad~\text{in}~~\R^N\setminus\overline{B}_1\\
     \frac{\partial v}{\partial\nu}&=0 \quad\quad\quad\quad\quad\quad\quad~\text{on}~~\partial B_1.
     \end{aligned}
     \right.
	\end{equation}
 We now deduce from Theorem \ref{t1} that $\overline{u}$ is a solution of \eqref{e2}. Finally, notice that
 \begin{equation*}
     I_K(\overline{u})=\min_{u\in V}I_K(u)\leq I_K(e)<0,
 \end{equation*}
 and thus, $\overline{u}$ is a nontrivial solution.
\end{proof}

\section{Non-constancy of solutions}\label{section:non-constency}
In this section, we show that when $a, b>0$ are two constants and $2\leq q<p$, then the nontrivial solution of 
\begin{equation}\label{e4-444-1}
     \left\{\begin{aligned}
     \cL u+u&=a|u|^{p-2}u-b|u|^{q-2}u\quad\quad~\text{in}~~B_1 \\
     \cN_su&=0 \quad\quad\quad\quad\quad\quad\quad\quad\quad~~~\text{in}~~\R^N\setminus\overline{B}_1\\
     \frac{\partial u}{\partial\nu}&=0 \quad\quad\quad\quad\quad\quad\quad\quad\quad~~~\text{on}~~\partial B_1
     \end{aligned}
     \right.
\end{equation}
obtained in Theorem \ref{first-main-result} is non-constant. We shall see that this is the case under certain conditions on $p$ and $q$. Now we set
\begin{equation*}
    G(u)=a|u|^{p-2}u-b|u|^{q-2}u-u=(a|u|^{p-2}-b|u|^{q-2}-1)u.
\end{equation*}
Then it is observed that every nontrivial constant function $u$ satisfying $G(u)=0$ is also a nontrivial solution. 

In the sequel, we denote by $\lambda_2^{rad}$ the second radial eigenvalue of $\cL+1$ in $B_1$ with Neumann boundary conditions. We have the following.
\begin{lemma}\label{radial-eigenvalue-problem}
    Let $v$ be an eigenfunction associated with $\lambda_2^{rad}$, namely $v$ is nontrivial and satisfies 
    \begin{equation}\label{e4-4444}
     \left\{\begin{aligned}
     \cL v+v&=\lambda_2^{rad}v\quad\quad~\text{in}~~B_1 \\
     \cN_sv&=0 \quad\quad\quad~~~\text{in}~~\R^N\setminus\overline{B}_1\\
     \frac{\partial v}{\partial\nu}&=0 \quad\quad\quad\quad\text{on}~~\partial B_1.
     \end{aligned}
     \right.
\end{equation}
Then $\lambda_2^{rad}>1$, $v$ is radial and unique up to a multiplicative factor. Moreover $\int_{B_1}v\ dx=0$.
\end{lemma}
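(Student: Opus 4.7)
The plan is to treat the eigenvalue problem variationally via the symmetric bilinear form
\[
a(u,v) := \langle u,v\rangle_{\mathbb{X}} = \int_{B_1}(\nabla u\cdot\nabla v+uv)\,dx + \iint_{\cQ}\frac{(u(x)-u(y))(v(x)-v(y))}{|x-y|^{N+2s}}\,dxdy,
\]
which encodes $\cL+1$ together with both Neumann conditions through the choice of test space $\mathbb{X}$. Restricted to $\mathbb{X}_{rad}$, the form $a$ is continuous, symmetric and coercive, and the embedding $\mathbb{X}_{rad}\hookrightarrow L^2(B_1)$ is compact; standard spectral theory for self-adjoint operators with compact resolvent then yields a nondecreasing sequence of radial eigenvalues $\lambda_1^{rad}\leq\lambda_2^{rad}\leq\cdots\to\infty$ with the usual min-max characterisation, each eigenspace being automatically contained in $\mathbb{X}_{rad}$ and so consisting of radial functions.

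To identify $\lambda_1^{rad}$ I would note that $a(u,u)=[u]_{\mathbb{X}}^2+\|u\|_{L^2(B_1)}^2\geq \|u\|_{L^2(B_1)}^2$, with equality iff $[u]_{\mathbb{X}}=0$, which happens precisely when $u$ is constant; since constants visibly solve $\cL\varphi+\varphi=\varphi$, one obtains $\lambda_1^{rad}=1$ with eigenspace exactly the constants. The min-max formula then reads
\[
\lambda_2^{rad}=\inf\left\{\frac{a(u,u)}{\|u\|_{L^2(B_1)}^2}: u\in\mathbb{X}_{rad}\setminus\{0\},\ \int_{B_1}u\,dx=0\right\}.
\]
If this infimum equalled $1$, a minimising sequence with unit $L^2$ norm and zero mean would satisfy $[u_n]_{\mathbb{X}}\to 0$, so in particular $\nabla u_n\to 0$ in $L^2$; extracting a subsequence convergent in $L^2$ to some $u_*$, the vanishing gradient would force $u_*$ to be constant, which is then zero by the zero-mean constraint, contradicting $\|u_*\|_{L^2}=1$. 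Hence $\lambda_2^{rad}>1$. The orthogonality $\int_{B_1}v\,dx=0$ follows by testing the eigenvalue equation against $\phi\equiv 1$: the gradient and nonlocal difference-quotient terms vanish, leaving $\int_{B_1}v\,dx=\lambda_2^{rad}\int_{B_1}v\,dx$, whence $\int_{B_1}v\,dx=0$ since $\lambda_2^{rad}>1$.

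The main obstacle is the simplicity of $\lambda_2^{rad}$. For the classical Neumann Laplacian the action of $-\Delta$ on radial functions reduces to a Sturm-Liouville ODE whose eigenvalues are automatically simple, but, as stressed in the discussion preceding Lemma \ref{llllm}, no such reduction is available for $\cL$. My plan is to argue by contradiction: given two linearly independent radial eigenfunctions $v_1,v_2$ for $\lambda_2^{rad}$ (both necessarily of zero mean by the previous step), one chooses $\alpha\in\R$ so that $w:=v_1-\alpha v_2$ vanishes at the origin; $w$ is then still a nontrivial radial eigenfunction with $\int_{B_1}w\,dx=0$. Combining a strong maximum principle for $\cL+\lambda_2^{rad}$ with the nonlocal and classical Neumann conditions on $\partial B_1$, one expects to rule out such a $w$, for instance by showing that a zero-mean radial eigenfunction which vanishes at the centre must be identically zero. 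The subtle point, and the place where the argument deviates substantially from the local case, is that any nodal-domain or sign-change analysis must be carried out without recourse to the ODE-based Sturm comparison theorems, so the nonlocal contributions to $\cL$ must be controlled directly on the radial profile.
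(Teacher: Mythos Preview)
Your treatment of everything except simplicity is correct and close in spirit to the paper, though the routes differ slightly. For $\lambda_2^{rad}>1$ you argue directly in $\mathbb{X}_{rad}$ via a Poincar\'e-type compactness contradiction on a zero-mean minimising sequence; the paper instead first shows $\lambda_1=1$ and $\lambda_2>1$ in the full space $\mathbb{X}$ (using that the first eigenspace is exactly the constants, so a second eigenfunction with $\int_{B_1}w\,dx=0$ cannot coincide with $e_1\equiv 1$) and then concludes via the inclusion $\mathbb{X}_{rad}\subset\mathbb{X}$, which gives $\lambda_2^{rad}\ge\lambda_2>1$. For the orthogonality $\int_{B_1}v\,dx=0$ you test the eigenvalue equation against $\phi\equiv 1$ and use $\lambda_2^{rad}\neq 1$, whereas the paper reads it off directly from the constrained variational characterisation. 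Both pairs of arguments are fine; yours are perhaps a bit more self-contained since you never leave $\mathbb{X}_{rad}$.

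On simplicity you correctly flag this as the genuine obstacle, and your proposed argument is indeed incomplete: choosing $\alpha$ so that $w=v_1-\alpha v_2$ vanishes at the origin presupposes $v_2(0)\neq 0$, and even granting that, no maximum principle or nodal-domain mechanism is actually supplied to force $w\equiv 0$; the final paragraph is an outline of hopes rather than a proof. It is worth noting, however, that the paper's own proof does not establish simplicity either: after proving $\lambda_2^{rad}>1$ it only records that, by the direct method, $\lambda_2^{rad}$ is attained at some $v\in\mathbb{X}_{rad}$ with zero mean, and stops there. So the uniqueness clause in the statement is not justified in the paper's argument any more than in yours. A look at the only place the lemma is used (the proof of Theorem~\ref{non-consistancy-theorem}) shows that simplicity plays no role: one merely needs \emph{some} radial eigenfunction $v$ for $\lambda_2^{rad}$ with $\int_{B_1}v\,dx=0$ and $\lambda_2^{rad}>1$. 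So the honest fix is to drop the uniqueness claim from the lemma rather than to chase a nonlocal Sturm-type argument that neither you nor the paper supplies.
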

\begin{proof}
    Let $\lambda_1$ be the first nontrivial Neumann eigenvalue of $\cL+1$ in $B_1$. Then
    \begin{equation}\label{f}
        \lambda_1=1.
    \end{equation}
    Indeed, by definition,
    \begin{align}\label{first-eigenvalue-of-L+1}
        \lambda_1=\inf_{0\neq u\in \mathbb{X}}\frac{\int_{B_1}|\nabla u|^2\ dx+\iint_{\cQ}\frac{(u(x)-u(y))^2}{|x-y|^{N+2s}}\ dxdy+\int_{B_1}|u|^2\ dx}{\int_{B_1}|u|^2\ dx}.
    \end{align}
    Then, in particular, $\lambda_1\geq 1$. Now, using the constant function $u\equiv 1$ as an admissible test function in \eqref{first-eigenvalue-of-L+1} we get that $\lambda\leq 1$ and thus, \eqref{f} follows. Notice that, $\lambda_1=1$ is achieved by the constant function $e_1=1$. 

    Now, let $\lambda_2$ be the second eigenvalue of $\cL+1$ in $B_1$ with Neumann boundary conditions. We claim that
    \begin{equation}\label{lambda-2}
        \lambda_2>1.
    \end{equation}
    To see this, we first notice that $\lambda_2\geq \lambda_1$, that is $\lambda_2=\lambda_1$ or $\lambda_2>\lambda_1$. Let $0\neq w\in\mathbb{X}$ be an eigenfunction associated to $\lambda_2$. Then by definition, 
    \begin{equation}\label{orthoganility-property}
        0=\int_{B_1}we_1\ dx=\int_{B_1}w\ dx.
    \end{equation}
    Now, if $\lambda_2$ were equal to $\lambda_1$ then necessarily $w=e_1=1$ and this violates \eqref{orthoganility-property}. Thus $\lambda_2$ must be strictly greater than $\lambda_1$ and therefore, \eqref{lambda-2} follows. 

    By the inclusion $\mathbb{X}_{rad}\subset\mathbb{X}$ we have  that $\lambda_2^{rad}\geq \lambda_2$ and thus, $\lambda_2^{rad}>1$, thanks to \eqref{lambda-2}. Moreover, by the direct method of calculus of variations, $\lambda_2^{rad}$ is attained at some $v\in \mathbb{X}_{rad}$ with $\int_{B_1}v\ dx=0$.
\end{proof}
Before proving Theorem \ref{non-consistancy-theorem}, let us first notice that under condition \eqref{key-condition-for-non-constancy-of-solutions}, $G$ admits a unique nonzero root. Indeed, set $g(t)=t^{p-2}-bt^{q-2}-1$. Then $g(1)=-b<0$ and $\lim_{t\to\infty}g(t)=+\infty$. Thus from the intermediate value theorem, $g$ has a root in $(1,+\infty)$. Next, $g'(t)=(p-2)t^{p-3}-b(q-2)t^{q-3}=t^{q-3}((p-2)t^{p-q}-b(q-2))>0$ for all $t\in (1, +\infty)$ thanks to \eqref{key-condition-for-non-constancy-of-solutions}. Hence $g$ is strictly increasing in $(1,+\infty)$. We then deduce that $g$ has a unique nonzero root in $(1,+\infty)$. On the other hand, for every $t\in [0,1]$, $g(t)<0$. In conclusion, $G$ has a unique nonzero root. Thus, \eqref{e4-444-1} admits a unique constant solution. Denote it by $u_0\in (1,+\infty)$. 

With the above remark in hand, we now give the proof of Theorem \ref{non-consistancy-theorem}.
\begin{proof}[Proof of Theorem \ref{non-consistancy-theorem}]
    The proof is similar to the one of \cite[Theorem 4.2]{moameni2019existence}. We report it here for the sake of completeness. The goal of the proof is to show that solution $\overline{u}$ of \eqref{e2} obtained in Theorem \ref{first-main-result} is different from $u_0$. Notice that $I(\overline{u})=I_K(\overline{u})=c$ where $c$ is defined (see \eqref{critical-value}) by
\begin{equation}\label{critical-value-2}
       c=\inf_{\gamma\in\Gamma}\sup_{t\in [0,1]}I(\gamma(t)),
\end{equation}
   where $\Gamma=\{\gamma\in C([0,1],V): \gamma(0)=0, \gamma(1)=e\}$.
So, to prove that $\overline{u}\not\equiv u_0$, it suffices to show that $c<I(u_0)$.

Let $v$ be as in Lemma \ref{radial-eigenvalue-problem} and take $\tau>0$ such that $\tau<\frac{\|u_0\|_{L^{\infty}(B_1)}}{\|v\|_{L^{\infty}(B_1)}}$. Then, $u_0+\tau v\in K$. For $r\in\R$, we have
\begin{align*}
    I((u_0+\tau v)r)=\frac{r^2}{2}\int_{B_1}\Big(u_0+\tau\sqrt{\lambda_2^{rad}}v\Big)^2\ dx+\frac{r^q}{q}b\int_{B_1}(u_0+\tau v)^q\ dx-\frac{r^p}{p}\int_{B_1}(u_0+\tau v)^p\ dx.
\end{align*}
Since $q<p$, there is $r>1$ such that $I((u_0+\tau v)r)\leq0$. We introduce the function $\gamma_{\tau}$ defined by $\gamma_{\tau}(t)=t(u_0+\tau v)r$. Notice that $\gamma_{\tau}\in \Gamma$.

Let $\psi:\R^2\to\R$ be given by
\begin{equation*}
    \psi(\tau,t)=I'(t(u_0+\tau v)r)[(u_0+\tau v)r].
\end{equation*}
Since $I$ is of class $C^2$ then $\psi$ is of class $C^1$ with $\psi(0,\frac{1}{r})=0$. Thus,
\begin{align*}
    \frac{d}{dt}\Big|_{(0,\frac{1}{r})}\psi(\tau, t)&=I''(u_0)(ru_0,ru_0)\\
    &=r^2\int_{B_1}\Big(1+b(q-1)u_0^{q-2}-(p-1)u_0^{p-2}\Big) u_0^2\ dx\\
    &=r^2\int_{B_1}u_0^{p}-bu_0^{q}+b(q-1)u_0^{q}-(p-1)u_0^{p}\ dx\\
    &=r^2\int_{B_1}b(q-2)u_0^{q}-(p-2)u_0^{p}\ dx<0.
\end{align*}
In the latter, we have used $G(u_0)=0$ and $b(q-2)<p-2$. Thus, the implicit function theorem guarantees the existence of $\varepsilon_1, \varepsilon_2>0$ and a $C^1$ function $h:(-\varepsilon_1, \varepsilon_1)\to (\frac{1}{r}-\varepsilon_2, \frac{1}{r}+\varepsilon_2)$ such that $h(0)=\frac{1}{r}$ and for $(\tau, t)\in (-\varepsilon_1, \varepsilon_1)\times (\frac{1}{r}-\varepsilon_2, \frac{1}{r}+\varepsilon_2)$ one has $\psi(\tau, t)=0$ if and only if $t=h(\tau)$. On the other hand,
\begin{align*}
    \frac{d}{d\tau}\Big|_{(0,\frac{1}{r})}\psi(\tau,t)&=I''(u_0)(u_0,rt)+I'(u_0)rv\\
    &=r\int_{B_1}\Big(1+b(q-1)u_0^{q-2}-(p-1)u_0^{p-2}\Big)u_0v\ dx\\
    &=r\Big(1+b(q-1)u_0^{q-2}-(p-1)u_0^{p-2}\Big)u_0\int_{B_1}v\ dx=0.
\end{align*}
Note that we have used the fact that $I'(u_0)=0$ and $\int_{B_1}v\ dx=0$. Thus, $h'(0)=0$.

Next, we claim that
\begin{equation}\label{lab}
    I(h(\tau)(u_0+\tau v)r)< I(u_0),\quad\quad\text{for all}~~\tau\in (-\varepsilon_1, \varepsilon_1).
\end{equation}
To see this, we first notice that since $h'(0)=0$, then for all $\tau\in (-\varepsilon_1, \varepsilon_1)$, we have $h(\tau)=\frac{1}{r}+o(\tau)$ so that
\begin{equation*}
    h(\tau)(u_0+\tau v)r-u_0=(rh(\tau)-1)u_0+\tau rh(\tau)v=\tau v+o(\tau).
\end{equation*}
Now recalling that $I'(u_0)=0$, Taylor expansion yields
\begin{align*}
    &I(h(\tau)(u_0+\tau v)r)-I(u_0)\\
    &=\frac{1}{2}I''(u_0)[h(\tau)(u_0+\tau v)r-u_0, h(\tau)(u_0+\tau v)r-u_0]+o(\tau^2)\\
    &=\frac{1}{2}I''(u_0)(\tau v+o(\tau), \tau v+o(\tau))+o(\tau^2)=\frac{\tau^2}{2}I''(u_0)(v,v)+o(\tau^2)\\
    &=\frac{\tau^2}{2}\Bigg(\int_{B_1}|\nabla v|^2\ dx+\iint_{\cQ}\frac{(v(x)-v(y))^2}{|x-y|^{N+2s}}\ dxdy+\int_{B_1}|v|^2\ dx\Bigg)\\
    &~~~~~~~+\frac{\tau^2}{2}\Bigg(\int_{B_1}\Big(b(q-1)u_0^{q-2}v^2-(p-1)u_0^{p-2}v^2\Big)\ dx\Bigg)+o(\tau^2)\\
    &=\frac{\tau^2}{2}\int_{B_1}\Big(\lambda_2^{rad}v^2+b(q-1)u_0^{q-2}v^2-(p-1)u_0^{p-2}v^2\Big)\ dx+o(\tau^2)\\
    &=\frac{\tau^2}{2u_0^2}\int_{B_1}\Big(\lambda_2^{rad}u_0^2+b(q-1)u_0^q-(p-1)u_0^p\Big)v^2\ dx+o(\tau^2)\\
    &=\frac{\tau^2}{2u_0^2}\int_{B_1}\Big(\lambda_2^{rad}u_0^p-\lambda_2^{rad}bu_0^q+b(q-1)u_0^q-(p-1)u_0^p\Big)v^2\ dx+o(\tau^2).
\end{align*}
In the latter, we have used that $G(u_0)=0$. Hence,
\begin{align}\label{lab1}
    I(h(\tau)(u_0+\tau v)r)-I(u_0)&=\frac{\tau^2}{2u_0^2}\int_{B_1}\Big((b(q-1)-\lambda_2^{rad})u_0^q-((p-1)-\lambda_2^{rad})u_0^p\Big)v^2\ dx+o(\tau^2).
\end{align}
Now, using \eqref{key-condition-for-non-constancy-of-solutions} and the fact that $\lambda_2^{rad}>1$ (see Lemma \ref{radial-eigenvalue-problem}), we have
\begin{equation*}
    b\leq \frac{p-2}{q-2}<\frac{p-1-\lambda_2^{rad}}{q-1-\lambda_2^{rad}}.
\end{equation*}
Taking this into account, the right-hand side in \eqref{lab1} is strictly negative, and thus \eqref{lab} follows.

On the other hand the function $t\mapsto I(\gamma_0(t))$ takes its unique maximum at $t=\frac{1}{r}$. Indeed,
\begin{align}
    \frac{d}{dt}I(\gamma_0(t))&=\frac{d}{dt}I(tu_0r)=I'(tu_0r)ru_0\\
    &=|B_1|\Big(tu_0^2r^2+bt^{q-1}u_0^qr^q-t^{p-1}u_0^pr^p\Big)u_0\\
    &=-|B_1|tu_0^2r^2g(tu_0r),
\end{align}
where $|B_1|$ is the measure of $B_1$ and $g(t)=t^{p-2}-bt^{q-2}-1$. Thus, $\frac{d}{dt}I(\gamma_0(t))>0$ if $t<\frac{1}{r}$ and $\frac{d}{dt}I(\gamma_0(t))<0$ if $t>\frac{1}{r}$. It therefore follows that $t\mapsto I(\gamma_0(t))$ has a unique maximum point at $t=\frac{1}{r}$.

Since $(\tau, t)\mapsto I(\gamma_{\tau}(t))$ is a continuous function, we can choose $0<\tau_0<\varepsilon_1$ sufficiently small such that the maximum of the function $t\mapsto I(\gamma_{\tau_0}(t))$ lies in $(\frac{1}{r}-\varepsilon_2, \frac{1}{r}+\varepsilon_2)$. Let $t_0\in (\frac{1}{r}-\varepsilon_2, \frac{1}{r}+\varepsilon_2)$ be such a maximum point. Then
\begin{equation*}
    0=\frac{d}{dt}I(\gamma_{\tau_0}(t))\Big|_{t=t_0}=I'(t_0(u_0+\tau_0v)r)(u_0+\tau_0v)r,
\end{equation*}
and therefore $t_0=h(\tau_0)$. From \eqref{lab} it follows that
\begin{equation*}
    I(t_0(u_0+\tau_0 v)r)<I(u_0).
\end{equation*}
Hence,
\begin{equation*}
     I(t(u_0+\tau_0 v)r)<I(u_0),\quad\quad \forall t\in [0,1].
\end{equation*}
Thus, $\max_{t\in [0,1]}I(\gamma_{\tau_0}(t))< I(u_0)$. This yields
\begin{equation*}
    c\leq \max_{t\in [0,1]}I(\gamma_{\tau_0}(t))< I(u_0),
\end{equation*}
as desired. The proof is finished.
\end{proof}

\section*{Data availability statement}
Data sharing not applicable to this article as no datasets were generated or analyzed during the current study.

\section*{Declaration of competing interest}

The authors declare that they have no known competing financial interests or personal relationships that could have
appeared to influence the work reported in this paper.

\section*{Authors contributions} 

All authors contributed equally. \\

\textbf{Acknowledgements:} D.A. and A.M. are pleased to acknowledge the support of the Natural Sciences and Engineering Research Council of Canada. R.Y.T. is supported by Fields Institute. The authors are grateful to referees for his/her valuable comments for improvement of this article.

\bibliographystyle{ieeetr}

\end{document}